\def\sideremark#1{\ifvmode\leavevmode\fi\vadjust{\vbox to0pt{\vss
  \hbox to 0pt{\hskip\hsize\hskip1em
  \vbox{\hsize2.5cm\tiny\raggedright\pretolerance10000
  \noindent #1\hfill}\hss}\vbox to8pt{\vfil}\vss}}}
\newtheorem{theorem}{Theorem}[section]
\newtheorem{corollary}[theorem]{Corollary}
\theoremstyle{definition}
\newtheorem{question}[theorem]{Question}
\newtheorem{conjecture}[theorem]{Conjecture}
\renewcommand{\H}{\mathbb{H}}
\renewcommand{\phi}{\varphi}
\newcommand{\R}{\mathbb{R}}
\newcommand{\iAH}{{\rm int}(AH(M))}
\newcommand{\rs}{\widehat{\mathbb{C}}}
\newcommand{\PSL}{{\mathbb{PSL}}}
\newcommand{\Isom}{{\rm Isom}}
\newcommand{\C}{{\mathbb{C}}}
\newcommand{\grad}{{\rm grad}}
\begin{document}

\title[Marden's Tameness Conjecture]{Marden's Tameness Conjecture:
History and Applications}

\author{Richard D. Canary}
\address{University of Michigan \\ Ann Arbor, MI 48109 \\ USA}
\thanks{research supported in part by grants from the National Science Foundation}

\begin{abstract}
Marden's Tameness Conjecture predicts that every hyperbolic 3-manifold
with finitely generated fundamental group is homeomorphic to the interior of
a compact 3-manifold. It was recently established by Agol and Calegari-Gabai.
We will survey the history of work on this conjecture and discuss its many applications.
\end{abstract}

\maketitle

\section{Introduction}

In a seminal paper, published in the Annals of Mathematics in 1974, Al Marden \cite{marden}
conjectured that every hyperbolic 3-manifold with finitely generated fundamental
group is homeomorphic to the interior of a compact 3-manifold. This conjecture
evolved into one of the central  conjectures in the theory of hyperbolic 3-manifolds.
For example, Marden's Tameness Conjecture
implies Ahlfors' Measure Conjecture (which we will discuss later). It is a crucial piece in
the recently completed classification of hyperbolic 3-manifolds with finitely generated fundamental
group. It also has
important applications to geometry and dynamics of hyperbolic 3-manifolds and gives important
group-theoretic information about fundamental groups of hyperbolic 3-manifolds.

There is a long history of partial results in the direction of Marden's Tameness Conjecture
and it was recently completely established by Agol \cite{agol} and Calegari-Gabai
\cite{calegari-gabai}. In this brief expository paper, we will survey the history of these
results and discuss some of the most important applications.

\medskip\noindent
{\bf Outline of paper:} In section 2, we recall basic definitions from the theory of
hyperbolic 3-manifolds. In section 3, we construct a 3-manifold with finitely
generated fundamental group which is not homeomorphic to the interior of a compact
3-manifold. In section 4, we discuss some of the historical background for Marden's
Conjecture and introduce the conjecture. In section 5, we introduce Thurston's notion
of geometric tameness, which turns out to be equivalent to topological tameness.
 In section 6, we sketch the history of partial
results on the conjecture. In section 7, we give geometric applications of the Tameness
Theorem, including Ahlfors' Measure Conjecture, spectral theory of hyperbolic 3-manifolds
and volumes of closed hyperbolic 3-manifolds. In section 8, we discuss group-theoretic
applications, including applications to the finitely generated intersection property and
separability properties of subgroups of Kleinian groups. In section 9, we discuss 
Simon's conjecture that the interior of a cover with finitely generated fundamental
group of a compact irreducible 3-manifold is topologically tame and give Long
and Reid's proof of Simon's conjecture from Simon's work and the Tameness Theorem.
In section 10, we explain the role of the Tameness Theorem in the classification of
hyperbolic 3-manifolds with finitely generated fundamental group. In section 11,
we discuss applications to the deformation theory of hyperbolic 3-manifolds.

\medskip\noindent
{\bf Acknowledgements:} I would like to thank Darren Long and Alan Reid for kindly
allowing me to include their proof of Simon's Conjecture from the Tameness Theorem
and to thank Alan Reid for allowing me to include his proof of Theorem \ref{reid}.
I would like to thank Yair Minsky and Alan Reid for helpful comments on earlier
versions of this manuscript. Finally, I would like to thank the organizers for their
patience and for an enjoyable and informative conference.

\section{Basic Definitions}

A (complete) hyperbolic 3-manifold is a complete Riemannian 3-manifold with  constant
sectional curvature -1. {\bf Throughout this paper we will assume that all manifolds
are orientable.} Any hyperbolic 3-manifold may be obtained as the quotient $N=\H^3/\Gamma$
where $\Gamma$ is a group of orientation-preserving isometries acting properly discontinuously
on $\H^3$. The group $\Gamma$ is called a {\em Kleinian group}. (More generally, a Kleinian
group is a discrete subgroup of $\Isom_+(\H^3)$. In this paper, all Kleinian groups will
be assumed to be torsion-free, so that their quotient is a hyperbolic 3-manifold.)

The group $\Isom_+(\H^3)$ of orientation-preserving isometries of $\H^3$ is naturally
identified with the group $\PSL_2(\C)$ of Mobius transformations of the Riemann sphere
$\rs$, which we regard as the boundary at infinity of $\H^3$. So, if $N=\H^3/\Gamma$, then
$ \Gamma$ acts also as a group of conformal automorphisms of $\rs$. We divide $\rs$ up
into the {\em  domain of discontinuity} $\Omega(\Gamma)$ which is the largest open
subset of $\rs$ on which $\Gamma$ acts properly discontinuously, and its complement,
 $\Lambda(\Gamma)$,which is called
the {\em limit set}.

Since $\Gamma$ acts properly discontinuously on $\Omega(\Gamma)$, the quotient
$\partial_c(N)=\Omega(\Gamma)/\Gamma$ is a Riemann surface, called the {\em conformal
boundary}. One may naturally append $\partial_c(N)$ to $N$ to obtain a 3-manifold with
boundary 
$$\hat N=N\cup \partial_c(N)=(\H^3\cup \Omega(\Gamma))/\Gamma.$$ 

Of course, the nicest possible situation is that this bordification $\hat N$ gives a compactification
of $N$, and $N$ will be called {\em convex cocompact} if $\hat N$ is compact. More generally,
$N$ is said to be {\em geometrically finite} if $\hat N$ is homeomorphic to $M-P$ where
$M$ is a compact 3-manifold and $P$ is a finite collection of disjoint annuli and tori
in $\partial M$. (These definitions are non-classical, see Marden \cite{marden} and
Bowditch \cite{bowditch} for a discussion of their equivalence to more standard definitions.)
If $\Gamma$ contains no parabolic elements, equivalently if every homotopically non-trivial
simple closed curve in $N$ is homotopic to a closed geodesic, then $N$ is said to have
{\em no cusps}. If $N$ has no cusps, then it is geometrically finite if and only if it is
convex cocompact.  If $N=\H^3/\Gamma$ is geometrically finite, we will also say
that the associated Kleinian group $\Gamma$ is geometrically finite.

We will say that a 3-manifold is {\em topologically tame} if it is homeomorphic to
the interior of a compact 3-manifold. It is clear from the definition we gave of geometric
finiteness, that geometrically finite hyperbolic 3-manifolds are topologically tame.

If $N$ is a hyperbolic 3-manifold, then its {\em convex core} $C(N)$ is the smallest
convex submanifold $C$  of $N$ such that the inclusion of $C$ into $N$ is a homotopy
equivalence. More concretely, $C(N)$ is the quotient $CH(\Lambda(\Gamma))/\Gamma$
of the convex hull of the limit set. Except in the special case where $\Lambda(\Gamma)$
is contained in a circle in the Riemann sphere, the convex core $C(N)$ is homeomorphic
to $\hat N$. In particular, $N$ is convex cocompact if and only if its convex core is compact.

Thurston \cite{thurston-notes} showed that the boundary of the convex core is a hyperbolic surface, in its
intrinsic metric. There is a strong relationship between the geometry of the boundary of
the convex core and the geometry of the conformal boundary. We recall that the conformal
boundary admits a unique hyperbolic metric in its conformal class, called the Poincar\'e metric.
Sullivan showed that  there exists a constant $K$ such that if $N$ has finitely generated,
freely indecomposable fundamental group, then there exists a $K$-bilipschitz map between $\partial C(N)$ and
$\partial_c(N)$. Epstein and Marden \cite{epstein-marden} gave a careful proof of this
result and showed that $K\le 82.8$. More complicated analogues of Sullivan's result
are known to hold when the fundamental group of $N$ is not freely indecomposable,
see Bridgeman-Canary \cite{bridgeman-canary}.

\section{Topologically wild manifolds}

In order to appreciate Marden's Tameness Conjecture we will sketch a  construction of a 3-manifold
with finitely generated fundamental group which is not topologically tame.

Whitehead \cite{whitehead} gave an example of a simply connected  (in fact, contractible)
3-manifold which is not homeomorphic to the interior of a ball. One may  use
Whitehead's construction to give examples of 3-manifolds with non-trivial finitely generated
fundamental group which are not topologically tame.  Scott and Tucker \cite{scott-tucker}
give an interesting collection of examples of topologically wild 3-manifolds with
a variety of properties.

The following example is essentially drawn from Tucker \cite{tucker}.
Whitehead's example is given as a nested union of solid tori.  The fundamental group
of our example will be the free group of rank two and will be obtained as a nested
union of handlebodies of genus 2.

Let $H$ denote a handlebody of genus 2
and let $g:H\to H$ be an embedding of $H$ into the interior of $H$ such that
\begin{enumerate}
\item
$g_*:\pi_1(H) \to \pi_1(H)$ is the identity map,
\item
$\partial H$ and $g(\partial H)$ are both incompressible  in
$H-g(H)$ (i.e. the inclusions induce injections
of $\pi_1(\partial H)$ and $\pi_1(g(\partial H))$ into $\pi_1(H-g(H))$)
\item
$\pi_1(g(\partial  H))$ and $\pi_1(\partial H)$
give rise to proper subgroups of $\pi_1(H-g(H))$.
\end{enumerate}
Let $B=H-g(H)$.

We then construct a manifold by letting $H_0=H$  and $H_\infty=\bigcup H_n$
where the pair $(H_n,H_{n-1})$ is homeomorphic to $(H,g(H))$ for all $n\ge 1$.
More concretely, we could define $H_n=H_{n-1}\cup B$
where we think of  $\partial H_{n-1}$ as the copy of $\partial H$ in the
$(n-1)^{\rm st}$ copy of $B$ and identify $\partial H$ with $g(\partial H)$
by the map $g$. Notice that $H_n$ is a handlebody for all $n$.

The inclusion of $H_0$ into $H_\infty$ is a homotopy
equivalence, since the inclusion of $H_{n-1}$ into  $H_n$ is a
homotopy equivalence for all $n$.
Therefore, $\pi_1(H_\infty)$ is the free group on
two generators. Note also that each $H_n$ is a compact core for $H_\infty$.
However, one can repeatedly apply the Seifert-Van Kampen theorem to
show that $H_\infty-H_0$ has infinitely generated fundamental group.
It then follows that $H_\infty$ is not topologically tame, since the complement
of a compact submanifold of a topologically tame 3-manifold must have finitely
generated fundamental group.

To check that $\pi_1(H_\infty-H_0)$ is infinitely generated, we observe
that $\pi_1(H_n-H_0)=\pi_1(H_{n-1}-H_0)*_{\pi_1({\partial H})}\pi_1(B)$ and that
$\pi_1(\partial H)$ injects into both factors. Moreover, $\pi_1(\partial H)$ is also
a proper subgroup of each factor. It follows that $\pi_1(H_{n-1}-H_0)$ injects
into $\pi_1 (H_n-H_0)$ and that  its image is a proper subgroup.
If $H_\infty-H_0$ had finitely generated fundamental group, then its generators
would have representatives lying in $H_{n-1}-H_0$ for some $n$ which would
contradict the previous sentence. Alternatively, one could note that
$$\pi_1(H_\infty-H_0)=\pi_1(B)*_{\pi_1(\partial H)}\pi_1(B) *_{\pi_1(\partial H)}\pi_1(B)*_{\pi_1(\partial H)}\cdots$$ 
and use properties of amalgamated free products to check that this group is infinitely
generated.

Tucker \cite{tucker} showed that one can always detect that an irreducible 3-manifold
is topologically wild by considering the fundamental group of the complement of
a compact submanifold.

\begin{theorem}{\rm (Tucker \cite{tucker})}
An irreducible 3-manifold $M$ (without boundary) is not topologically tame if and only if there exists
a compact submanifold $C$  of $M$ such that $\pi_1(M-C)$ is infinitely generated.
\end{theorem}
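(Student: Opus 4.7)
I would first handle the easy direction, i.e., the contrapositive statement that if $M=\mathrm{int}(\bar M)$ is tame then $\pi_1(M-C)$ is finitely generated for every compact submanifold $C$. Enlarging $C$ to a compact submanifold $C'\supset C$ whose complement $M-C'$ is a disjoint union of open collars $S_j\times[0,1)$ on the boundary components $S_j$ of $\bar M$, I would decompose $M-C$ as $(C'-\mathrm{int}(C))\cup(M-\mathrm{int}(C'))$, using a bicollar of $\partial C'$ to produce the open cover needed for Van Kampen. The compact 3-manifold-with-boundary $C'-\mathrm{int}(C)$, each collar $S_j\times[0,1)$, and the closed surface $\partial C'$ all have finitely generated $\pi_1$, so Seifert--van Kampen realises $\pi_1(M-C)$ as an amalgamated free product of finitely generated groups, hence finitely generated.

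For the hard direction, assume $M$ is not tame; I need to find a compact submanifold $C$ with $\pi_1(M-C)$ infinitely generated. If $\pi_1(M)$ is already infinitely generated, take $C$ to be any closed 3-ball: dimension-three transversality lets one push loops and their null-homotopies off a single point, so $\pi_1(M-C)\cong\pi_1(M)$. Otherwise $\pi_1(M)$ is finitely generated, and I would argue by contradiction, assuming $\pi_1(M-C)$ is finitely generated for every compact submanifold $C$ and deducing that $M$ is tame. Using Scott's compact core theorem, fix a compact core $C_0\subset M$ and exhaust $M$ by compact submanifolds $C_0\subset C_1\subset\cdots$. For each end $e$ of $M$, the nested end neighborhoods $V_i(e)$ (components of $M-C_i$ converging to $e$) all have finitely generated fundamental group. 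Using irreducibility of $M$ together with the Loop Theorem and Dehn's Lemma, I would compress the frontier surfaces $\partial V_i(e)$ inside $V_i(e)$ until they become incompressible, with finite generation bounding the compression complexity. A Waldhausen-type cobordism argument would then identify the stabilized end neighborhoods with products $S\times[0,\infty)$, contradicting non-tameness.

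\textbf{Main obstacle.} The crux is this end-tameness step in the finitely generated case: ruling out Whitehead-type pathology in which compressing disks accumulate near an end without stabilizing. The hypothesis of finite generation at every scale must be propagated through the inverse system $\pi_1(V_{i+1}(e))\to\pi_1(V_i(e))$ and then, by 3-manifold surgery techniques, converted into a genuine product structure. This pro-finite-generation to product passage is the technical heart of Tucker's theorem and is where I would expect to do most of the work.
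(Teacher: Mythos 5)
The paper does not actually prove this theorem: it is quoted from Tucker's paper, and the only part the survey uses (and implicitly justifies, in the Whitehead-type example of Section 3) is the easy direction, namely that a topologically tame manifold has finitely generated $\pi_1(M-C)$ for every compact submanifold $C$. Your proof of that direction --- enlarging $C$ until its complement is a disjoint union of boundary collars and applying Seifert--van Kampen to finitely many finitely generated pieces glued along compact surfaces --- is correct and is essentially the argument the paper has in mind when it asserts that the complement of a compact submanifold of a tame $3$-manifold has finitely generated fundamental group.

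The hard direction, however, is not proved in your proposal, and you acknowledge as much. The reduction to the case where $\pi_1(M)$ is finitely generated is fine (the ball trick disposes of the other case), and the contrapositive set-up via Scott's compact core and an exhaustion is the right frame; this is indeed how Tucker proceeds. But the entire content of the theorem is the step you defer: showing that if every end neighborhood $V_i(e)$ has finitely generated fundamental group, then the end is eventually a product $S\times[0,\infty)$. Two specific things are missing. First, ``finite generation bounding the compression complexity'' is not an argument: finite generation of $\pi_1(V_i(e))$ does not by itself bound the genus of the frontier surfaces or the number of Loop Theorem compressions required, and each compression can be performed either toward the core or out toward the end, which must be controlled to avoid exactly the Whitehead-type accumulation you mention. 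Second, even after arranging incompressible frontier surfaces, one must show that the inclusions of consecutive frontiers eventually induce isomorphisms on $\pi_1$ before the product-region theorem (a compact irreducible $W$ with an incompressible boundary component $S$ for which $\pi_1(S)\to\pi_1(W)$ is an isomorphism is a product $S\times I$) can be applied; extracting this stabilization from the hypothesis that $\pi_1(M-C)$ is finitely generated for \emph{all} compact $C$ is the technical heart of Tucker's proof. As written, your proposal is a correct strategy outline with the decisive lemma unestablished.
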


\section{Prehistory and the conjecture}

Hyperbolic 2-manifolds with finitely generated fundamental group are well-known
to be geometrically finite. For many years, it was unclear whether the analogous
statement was true for hyperbolic 3-manifolds.
In 1966, Leon Greenberg \cite{greenberg} established the existence of hyperbolic 3-manifolds
with finitely generated fundamental group which are not geometrically finite. Troels J\o rgensen
\cite{jorgensen}
was the first to explicitly exhibit geometrically infinite hyperbolic 3-manifolds with
finitely generated fundamental group.

One early piece of evidence for Marden's Tameness Conjecture was provided by Ahlfors'
Finiteness Theorem \cite{ahlfors-AFT} which asserts that if $\Gamma$ is finitely
generated, then $\partial_c(N)$ is a finite collection of finite type Riemann surfaces.
Alternatively, one could say that the conformal boundary has finite area in its  Poincar\'e
metric.

On the topological side, Peter Scott \cite{scott}  showed that any 3-manifold  $M$ with finitely
generated fundamental group contains a {\em compact core}, i.e. a compact submanifold $C$
such that the inclusion of $C$ into $M$ induces an isomorphism from $\pi_1(C)$ to
$\pi_1(M)$. If $M$ is a hyperbolic 3-manifold, or more generally if $M$ is irreducible, one
may assume that the inclusion of $C$ into $M$ is a homotopy equivalence. This implies,
in particular, that finitely generated 3-manifold groups are actually finitely presented.
It follows from work of McCullough, Miller and Swarup \cite{MMS} that if $M$ is  irreducible and topologically
tame, then it is homeomorphic to the interior of its compact core.

In 1974, Al Marden published a long paper \cite{marden} which was the
first papers to bring to bear the classical results of 3-manifold topology on
the study of hyperbolic 3-manifolds. Previously, most of the work on hyperbolic
3-manifolds, was done by considering the actions of their fundamental groups on
the Riemann sphere. In the appendix of  this paper, Marden asked two  prescient questions
which we will rephrase in our language.

\medskip\noindent
{\bf Marden's first question:} {\em If $N$ is a hyperbolic 3-manifold with finitely generated fundamental
group, is $N$ topologically tame?}

\medskip\noindent
{\bf Marden's second question:} {\em Is there a necessary and sufficient condition which
guarantees that a compact 3-manifold $M$ is hyperbolizable? (A compact manifold is hyperbolizable
if there admits a complete
hyperbolic metric on the interior of $M $.)}

\medskip

The first question became known as Marden's Tameness Conjecture, while
the second question foreshadows Thurston's Geometrization Conjecture.
In particular, Thurston's Geometrization Conjecture predicted that a compact
3-manifold is hyperbolizable if it is irreducible, atoroidal and its fundamental
group is not virtually abelian. We recall that a compact 3-manifold $M$ is {\em irreducible} if every embedded 2-sphere
bounds a ball and is called {\em atoroidal} if $\pi_1(M)$ does not contain a free abelian
subgroup of rank two. Thurston established his hyperbolization  conjecture whenever $M$
is Haken, e.g. whenever $M$ has non-empty boundary.
See Morgan \cite{morgan} or Kapovich \cite{kapovich-book} for extensive discussions
of Thurston's Hyperbolization  Theorem. Perelman  \cite{perelman,perelman2} recently gave a proof of Thurston's
entire Geometrization Conjecture. (See Kleiner-Lott \cite{kleiner-lott}, Morgan-Tian \cite{morgan-tian}
and Cao-Zhu \cite{cao-zhu} for expositions of Perelman's work.)

\section{Geometric tameness}

In Thurston's work on the Geometrization Conjecture he developed a notion called
geometric tameness. His definition only worked in the setting of hyperbolic 3-manifolds
with freely indecomposable fundamental group. We will give a definition of geometric
tameness, first introduced in \cite{canary-ends}, which works in a more general setting.

In order to motivate the definition we will consider a specific example of a geometrically
infinite manifold. Thurston \cite{thurston2} showed that any atoroidal 3-manifold
which fibers over the circle, whose fundamental group is not virtually abelian, 
is hyperbolizable. (Thurston was inspired by J\o rgensen's example \cite{jorgensen} which
was the cover of a hyperbolic 3-manifold which fibers over the circle.)
Let $M$ be a closed hyperbolic 3-manifold
which fibers over the circle with fiber the closed surface $S$ and let $\hat M$ be the regular
cover associated to $\pi_1(S)$. The fibered manifold $M$ is obtained from $S\times [0,1]$ by
gluing $S\times \{0\}$ to $S\times\{1\}$ by a homeomorphism $\phi:S\to S$
and the manifold $\hat M$ is
obtained from infinitely many copies of $S\times [0,1]$ stacked one on top of the other.
So, $\hat M$ is homeomorphic to $S\times \R$ and the group of covering transformations
is generated by $\hat \phi:S\times \R \to S\times \R$ where $\hat\phi(x,t)=(\phi(x),t+1)$.
One sees that the ends of $\hat M$ grow linearly, which is surprising for a hyperbolic
manifold. In particular, if one considers a minimal surface in $M$ in the homotopy class of 
$S$, its pre-image in $\hat M$ is an infinite family of surfaces exiting both ends of
$\hat M$. The key property of these surfaces is that they have curvature $\le -1$.
Thurston realized that the existence of such a family of surfaces was both
widespread and quite useful.

In our definition of geometric tameness, we will make use of simplicial hyperbolic
surfaces. We will give a careful definition of a simplicial hyperbolic surface $f:S\to N$, but we first
note that the key issue is that the induced metric  on $S$ has curvature $\le -1$ (in the sense of
Alexandrov.) Thurston \cite{thurston-notes} originally made use of pleated surfaces,
Minsky \cite{minsky-harmonic} showed that one can use harmonic maps, Bonahon \cite{bonahon}
pioneered the use of simplicial hyperbolic surfaces, Calegari and Gabai \cite{calegari-gabai}
used shrinkwrapped surfaces and Soma \cite{soma} used ruled wrappings (which are a simplicial
analogue of  shrinkwrapped surfaces.)

A map $f:S\to N$ is a {\em simplicial hyperbolic surface} if there exists a triangulation
$T$ of $S$ such that $f$ maps faces of $T$ to totally geodesic immersed triangles in $N$ and
the total angle of the triangles about any vertex adds up to at least $2\pi$. We note
that we allow our triangulations to have the property that vertices and edges of faces
may be identified. For example, one can obtain a triangulation of a torus with two faces by adding
the diagonal to the usual square gluing diagram. Simplicial hyperbolic surfaces share many
useful properties with actual hyperbolic surfaces. For example, their area is bounded
above by $2\pi|\chi(S)|$ and the diameter of each component of their ``thick part'' is uniformly bounded.

Let $N$ be a hyperbolic 3-manifold with finitely generated fundamental group.
For the purposes of simplifying the definition we will assume that $N$ has no cusps.
Let $C$ be a compact core for $N$. The ends of $N$ may be identified with
the components of $N-C$. For our purposes, a neighborhood  $U$ of an end $E$ is
a subset of $E$ such that $E-U$ has compact closure. We will say that an end is
{\em geometrically finite} if it has a neighborhood disjoint from the convex core. 
We say that an end is {\em simply degenerate} if it has a neighborhood $U$ which is
homeomorphic to $S\times (0,\infty)$ for some closed surface $S$ and there exists a
sequence $\{ f_n:S\to U\}$ of simplicial hyperbolic surfaces, such that 
\begin{enumerate}
\item
given any compact subset
$K$ of $N$, $f_n(S)\cap K$ is empty for all but finitely many $n$, and
\item
for all $n$,
$f_n$ is homotopic, within $U$, to the map $h_1:S\to U$ given by $h_1(x)=(x,1)$.
\end{enumerate}
A hyperbolic 3-manifold with finitely generated fundamental group is said to
be {\em geometrically tame} if each of its ends is either geometrically finite
or simply degenerate.

It is not difficult to show that any geometrically finite end has a neighborhood
homeomorphic to $S\times (0,\infty)$ for some closed surface $S$.
Therefore, one easily observes that geometrically tame hyperbolic 3-manifolds
are topologically tame. Thurston originally gave a much weaker definition of
geometric tameness in the setting of hyperbolic 3-manifolds with freely indecomposable
fundamental group. In this setting, Thurston \cite{thurston-notes} was able to show that
his weaker definition of geometric tameness implied topological tameness and
in fact implied the definition given here.

\section{History}

In this section, we will give a brief history of the partial results on Marden's Tameness
Conjecture leading up to its final solution. Many of these results involved looking at
limits of geometrically finite or topologically tame hyperbolic 3-manifolds.  Readers
who are not interested in the historical development may prefer to skip ahead to the next
section.

There are two types of convergence, algebraic and geometric, which play a prominent role in
the theory of hyperbolic 3-manifolds.
A sequence $\{\rho_n:G\to \PSL_2(\C)\}$  of discrete faithful representations is
said to {\em converge algebraically} to $\rho:G\to \PSL_2(\C)$ if it converges
in the compact-open topology on ${\rm Hom}(G,\PSL_2(\C))$. If $G$ is not
virtually abelian (i.e. does not contain a finite index abelian subgroup), then
$\rho$ is also discrete and faithful (see Chuckrow \cite{chuckrow} and 
J\o rgensen \cite{jorgensen-ineq}.)
A sequence of Kleinian groups $\{\Gamma_n\}$ is said to {\em converge geometrically}
to a Kleinian group $\Gamma$ if every $\gamma\in\Gamma$ arises as a limit
of a sequence $\{\gamma_n\in\Gamma_n\}$ and every limit $\beta$ of a sequence
of elements $\{\gamma_{n_k}\}$ in a subsequence $\{\Gamma_{n_k}\}$ of $\{\Gamma_n\}$
lies in $\Gamma$. We say that a sequence of hyperbolic 3-manifolds $\{ N_n\}$
converges geometrically to $N$ if one may write $N_n=\H^3/\Gamma_n$ and $N=\H^3/\Gamma$
so that $\{\Gamma_n\}$ converges geometrically to $\Gamma$. This implies,
see \cite{CEG} or \cite{BP}, that $\{ N_n\}$ converges in the sense of Gromov to $N$, i.e.
 ``larger and larger'' subsets of $N_n$ look
``increasingly like'' large subsets of $N$ (as $n$ goes to $\infty$.) A sequence
$\{\rho_n:G\to \PSL_2(\C)\}$  of discrete faithful representations is
said to {\em converge strongly} to $\rho:G\to \PSL_2(\C)$  it it converges algebraically
and $\{\rho_n(G)\}$ converges geometrically to $\rho(G)$. 

Thurston \cite{thurston-notes} proved the following theorem concerning limits of geometrically finite
hyperbolic 3-manifolds with freely indecomposable fundamental group. He used his
theorem in the proof of his geometrization theorem.

\begin{theorem}{\rm (Thurston \cite{thurston-notes})}
Let $G$ be a finitely generated, torsion-free, freely indecomposable, non-abelian group.
If $\{\rho_n:G\to \PSL_2(\C)\}$ is a sequence of discrete, faithful, geometrically tame representations converging
algebraically to $\rho:G\to\PSL_2(\C)$
such that $\rho_n(g)$ is parabolic for some $n$ if and only if $\rho(g)$ is parabolic,
then $\{\rho_n\}$ converges strongly to $\rho$ and $\rho(G)$ is geometrically tame.
\end{theorem}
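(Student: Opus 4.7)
The plan is to pass to a subsequence so that $\{\rho_n(G)\}$ converges geometrically to some Kleinian group $\Gamma_\infty$ with $\rho(G)\subseteq\Gamma_\infty$, corresponding to a local-isometric covering $\pi\colon N=\H^3/\rho(G)\to\hat N=\H^3/\Gamma_\infty$. The theorem then splits into two pieces: (a) $N$ is geometrically tame, and (b) $\pi=\id$, which is exactly the strong convergence statement. By Scott's theorem, $N$ admits a compact core $C$; free indecomposability fixes its topology, and the no-new-parabolics hypothesis together with the Margulis lemma identifies the cusps of $N$ with limiting cusps of the $N_n=\H^3/\rho_n(G)$ in a uniform way. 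I would then work in the non-cuspidal part $N^0$ and its relative compact core, so that each end of $N^0$ is of the form $S\times(0,\infty)$ for a component $S$ of the relative frontier.

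Next I would establish geometric tameness of $N$ end by end. Geometric convergence supplies, for all large $n$, basepoint-preserving approximate isometries $\phi_n\colon K_n\to N_n$ defined on an exhausting sequence of compact pieces $K_n\subset \hat N$; composing with $\pi$, these lift to approximate isometries between larger and larger neighborhoods of $C$ in $N$ and the corresponding neighborhoods in $N_n$. Fix an end $E$ of $N^0$ with frontier surface $S$, and let $E_n\subset N_n$ be the matching end. Since $N_n$ is geometrically tame, $E_n$ is either geometrically finite or simply degenerate. In the first case, a neighborhood of $E_n$ stays uniformly far from the convex core of $N_n$, and this condition survives the limit, giving $E$ geometrically finite. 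In the second case, $E_n$ carries an exiting sequence of simplicial hyperbolic surfaces $g_{n,k}\colon S\to N_n$ homotopic to $S$; their uniform area bound $2\pi|\chi(S)|$ and the uniform thick-part diameter bound make them geometrically precompact. Pulling back by $\phi_n^{-1}$ and applying a diagonal argument, I would extract a sequence of simplicial hyperbolic surfaces in $N$ homotopic to the frontier $S$ that exit $E$, establishing simple degeneracy.

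For the strong convergence (b), I would argue by contradiction. Suppose $\Gamma_\infty\supsetneq\rho(G)$, so that $\pi$ has sheets beyond the identity. Then there exist points $x_n\in N$ with $\pi(x_n)$ converging in $\hat N$ but $d_N(x_n,C)\to\infty$. Each $x_n$ lies in some end $E$ of $N^0$, which by (a) is simply degenerate, so there is a simplicial hyperbolic surface $f_n\colon S\to N$ passing within uniformly bounded distance of $x_n$. The projected surfaces $\pi\circ f_n$ in $\hat N$ all have area $\le 2\pi|\chi(S)|$ and meet a fixed compact set, but their domains $f_n(S)\subset N$ are pairwise disjoint for large $n$ since the $x_n$ exit the cover. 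This contradicts the fact that a compact region of $\hat N$ can contain only finitely many pairwise disjoint simplicial hyperbolic surfaces of bounded area and fixed topological type. Hence $\pi=\id$ and $\rho_n\to\rho$ strongly.

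The hardest step I expect is guaranteeing that the limiting simplicial hyperbolic surfaces in $N$ actually exit the end $E$ rather than being swallowed by a newly-forming thin part. This is precisely where the no-new-parabolics hypothesis is crucial: without it, an exiting surface $g_{n,k}$ in $N_n$ could be pinched by a short geodesic that becomes parabolic in the limit, so its pullback by $\phi_n^{-1}$ would run off into a cusp of $N$ instead of exiting $E$. Controlling this requires combining the Margulis lemma with the free-indecomposability hypothesis to keep the relative compact core rigid along the geometric limit, and then invoking the uniform thick-part geometry of simplicial hyperbolic surfaces to force the exiting behavior to persist under the diagonal extraction.
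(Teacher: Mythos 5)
The central step of your tameness argument fails. You split into cases according to whether the end $E_n$ of $N_n$ is geometrically finite or simply degenerate, and in the first case you claim that ``a neighborhood of $E_n$ stays uniformly far from the convex core of $N_n$, and this condition survives the limit, giving $E$ geometrically finite.'' There is no such uniformity, and the conclusion is false: take $G=\pi_1(S)$ for a closed surface $S$ and let $\rho_n$ be quasi-Fuchsian representations without parabolics converging algebraically to a boundary point $\rho$ of a Bers slice whose quotient is singly degenerate and still has no parabolics. Every hypothesis of the theorem holds and every end of every $N_n$ is geometrically finite, yet one end of $N_\rho$ is simply degenerate: the convex cores $C(N_n)$ grow without bound and engulf that end in the limit, so ``far from the convex core'' carries no uniform content. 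This is not a peripheral case but the heart of the theorem. Thurston's argument confronts it by tracking the ending invariants of the $E_n$: if the conformal structures at infinity stay in a compact part of Teichm\"uller space the end remains geometrically finite, while if they diverge one extracts a limiting lamination and shows, via pleated surfaces and the continuity of realizability under algebraic convergence, that it cannot be realized in $N_\rho$, forcing the end to be simply degenerate. Your scheme of transporting the approximates' simplicial hyperbolic surfaces can never produce exiting surfaces in this case, because the approximates contain no such surfaces to transport.

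Two further gaps. First, the comparison maps of geometric convergence identify compact pieces of the geometric limit $\hat N=\H^3/\Gamma_\infty$ with pieces of $N_n$, so pulling surfaces back lands you in $\hat N$, not in $N_\rho$; you must lift them to $N_\rho$ (possible because they are homotopic to an incompressible core boundary component whose fundamental group lies in $\rho(G)$) and then check that the lifts still exit the intended end, and this must be done before you know $\pi$ is trivial. Second, in your strong-convergence argument the points $x_n$ with $\pi(x_n)$ precompact and $d_N(x_n,C)\to\infty$ could exit a geometrically finite end of $N_\rho$, where there are no bounded-area surfaces to count; that case requires a separate argument through the domains of discontinuity of $\rho(G)$ and $\Gamma_\infty$ showing the covering is finite-to-one near such an end. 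For calibration, the survey you are being checked against does not reprove this result; it cites Thurston's notes, where the proof is organized around ending laminations and pleated surfaces rather than around transporting surfaces from the approximates.
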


In a breakthrough paper, Bonahon \cite{bonahon}  proved that all hyperbolic 3-manifolds with
freely indecomposable, finitely generated fundamental group are geometrically tame,
and hence topologically tame.

\begin{theorem}{\rm (Bonahon\cite{bonahon})}
If $N$ is a hyperbolic 3-manifold with finitely generated, freely indecomposable group
then $N$ is geometrically tame.
\end{theorem}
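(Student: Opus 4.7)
My plan is to follow Bonahon's end-by-end strategy, analyzing each end of $N$ via a geometric dichotomy and matching each alternative to a clause in the definition of geometric tameness. For simplicity assume $N$ has no cusps, as in Section 5; the general case only requires carrying along the parabolic locus. By Scott's compact core theorem, choose a compact core $C \subset N$ so that $C \hookrightarrow N$ is a homotopy equivalence. Since $\pi_1(N)$ is freely indecomposable and $N$ is irreducible, the loop theorem together with a standard cut-and-paste argument forces $\partial C$ to be incompressible: a compressing disk would split $\pi_1(C) = \pi_1(N)$ as a non-trivial free product (if separating) or as a free product with $\mathbb{Z}$ (if non-separating), contradicting free indecomposability. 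Hence each component $E$ of $N - C$ is a neighborhood of a single end, homeomorphic to $S \times [0,\infty)$ for a closed incompressible surface $S$ of genus $\ge 2$ whose inclusion in $N$ is $\pi_1$-injective.

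The heart of the proof is the following dichotomy for each end $E$: either (i) $E$ has a neighborhood disjoint from the convex core $C(N)$, or (ii) there is a sequence of closed geodesics $\gamma_n^* \subset N$, each freely homotopic in $N$ into $S$, which leave every compact subset of $N$ while remaining in $E$. Case (i) is precisely geometric finiteness of the end. To force (ii) when (i) fails, I would combine incompressibility of $S$ with an analysis of the action of $\pi_1(S)$ on the sphere at infinity: failure of (i) means the convex core meets arbitrarily deep neighborhoods of $E$, and via $\pi_1$-injectivity of $S$ one argues combinatorially that short curves on surfaces penetrating deeply into $E$ can be realized as closed geodesics that themselves escape the end. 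This dichotomy is the main technical obstacle and is exactly where free indecomposability is essential; it fails for the topologically wild examples of Section 3.

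Given case (ii), I would interpolate the $\gamma_n^*$ using simplicial hyperbolic surfaces, the tool Bonahon pioneered. For each $n$, choose a triangulation $T_n$ of $S$ containing an edge-loop representing the free homotopy class of $\gamma_n^*$, and straighten it to a simplicial hyperbolic map $f_n : S \to N$ in the homotopy class of the inclusion $S \hookrightarrow N$ whose image contains $\gamma_n^*$. Since the induced metric has curvature $\le -1$, Gauss--Bonnet gives the area bound $\mathrm{Area}(f_n(S)) \le 2\pi|\chi(S)|$, and the thick--thin decomposition provides a uniform bound on the diameter of each thick component. Consequently every point of $f_n(S)$ lies within a controlled $N$-distance of $\gamma_n^*$, so $f_n(S)$ exits every compact subset of $N$ as $\gamma_n^*$ does, yielding clause (1) of simple degeneracy. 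Clause (2) --- that $f_n$ is homotopic within $E$ to the level map $h_1$ --- then follows from $\pi_1$-injectivity of $S$ and the product structure $E \cong S \times [0,\infty)$.

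Combining the two cases shows every end of $N$ is either geometrically finite or simply degenerate, so $N$ is geometrically tame. The hardest step by a wide margin is producing the exiting closed geodesics in case (ii); the subsequent interpolation by simplicial hyperbolic surfaces, though technically delicate, is controlled by the universal area and diameter bounds and is essentially standard once Bonahon's dichotomy is in hand.
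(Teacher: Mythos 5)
The paper you are working from is a survey: it states Bonahon's theorem with a citation and gives no proof, so there is nothing internal to compare your argument against. Judged on its own terms, your proposal correctly reproduces the architecture of Bonahon's argument (compact core with incompressible boundary forced by free indecomposability, an end-by-end dichotomy, interpolation by negatively curved surfaces), but it has a genuine gap exactly where you flag ``the main technical obstacle.'' The assertion that a geometrically infinite end $E$ contains a sequence of closed geodesics, each homotopic into $S$, exiting every compact set is not a step one can wave at with ``an analysis of the action of $\pi_1(S)$ on the sphere at infinity'' and a combinatorial argument about short curves. This is the entire content of Bonahon's paper: he develops the theory of geodesic currents and the continuity of the intersection number, proves that geodesics entering the end in almost every direction must exit it (his ``Propri\'et\'e $(\ast)$''-type statements), and uses intersection-number estimates against pleated/simplicial surfaces to show the resulting geodesics can be taken homotopic into the boundary surface. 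Without supplying that machinery, or an equivalent (e.g.\ Canary's branched-cover reduction, or the shrinkwrapping of Calegari--Gabai), the dichotomy is an assertion, not a proof, and the proposal reduces to an outline.

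A secondary but real error: your claim that ``every point of $f_n(S)$ lies within a controlled $N$-distance of $\gamma_n^*$'' does not follow from the area bound and the bounded diameter of the thick part alone. The induced metric on a simplicial hyperbolic surface can have arbitrarily long thin parts, so a component of $f_n(S)$ could a priori reach back from deep in the end into a fixed compact set through a Margulis tube. One must separately control the thin parts, e.g.\ by noting that a short geodesic on $f_n(S)$ maps into a Margulis tube of $N$ and that only finitely many such tubes meet a given compact set. Similarly, clause (2) of simple degeneracy (that $f_n$ is homotopic to the level map \emph{within} $U$) requires an argument that the homotopy from $f_n$ to the inclusion of $S$ can be pushed into the end neighborhood; this uses incompressibility and a Waldhausen-type lemma, not just the product structure. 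These are repairable with standard tools, unlike the central dichotomy, which is where the real work lives.
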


Canary \cite{canary-ends} used Bonahon's work to show that topological tameness
and geometric tameness are equivalent notions.

\begin{theorem}{\rm (Canary \cite{canary-ends})}
Let $N$ be a hyperbolic 3-manifold with finitely generated fundamental group.
Then, $N$ is topologically tame if and only if $N$ is geometrically tame.
\end{theorem}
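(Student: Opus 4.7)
My plan is to handle the two implications separately. The direction from geometrically tame to topologically tame is close to definitional. Each geometrically finite end has a neighborhood homeomorphic to $S\times(0,\infty)$ for a closed surface $S$ (as noted in the excerpt), and each simply degenerate end has such a neighborhood by definition; gluing these product neighborhoods onto a compact core exhibits $N$ as the interior of a compact 3-manifold.

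The substantive content is the converse: topologically tame implies geometrically tame. Assume for simplicity that $N$ has no cusps. Pick a compact core $C\subset N$, which by McCullough--Miller--Swarup may be taken so that $N\setminus{\rm int}(C)$ is a disjoint union of product neighborhoods $S_i\times[0,\infty)$, one for each end $E_i$. Fix an end $E_i$ that is not geometrically finite; I must produce simplicial hyperbolic surfaces in $E_i$, each homotopic in $E_i$ to $S_i\times\{1\}$, and exiting every compact set.

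When $S_i$ is incompressible in $N$, my plan is to pass to the cover $\widehat N\to N$ corresponding to $\pi_1(S_i)\subset\pi_1(N)$. Then $\pi_1(\widehat N)=\pi_1(S_i)$ is finitely generated, freely indecomposable and non-abelian, so Bonahon's theorem applies and $\widehat N$ is geometrically tame. A preliminary step is to check that the end of $\widehat N$ lying above $E_i$ is not geometrically finite (which follows because $E_i$ itself is not), so it must be simply degenerate. The simplicial hyperbolic surfaces exiting that end project down through the local isometry $\widehat N\to N$ to the simplicial hyperbolic surfaces exiting $E_i$ that we need.

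The main obstacle is the compressible case, in which $\pi_1(S_i)$ fails to inject into $\pi_1(N)$ and the covering trick breaks down. Here I would pass to a hyperbolic branched cover $\widetilde N\to N$ branched over a suitable simple closed geodesic $\gamma\subset C$, chosen to link an essential compression of $S_i$, and arranged so that in $\widetilde N$ the preimage of $S_i$ becomes incompressible while the branched cover still carries a metric of curvature $\le -1$ (as a hyperbolic cone manifold, which is all the simplicial-hyperbolic-surface machinery requires). Applying Bonahon's theorem in $\widetilde N$ then produces simplicial hyperbolic surfaces exiting the lifted end, and pushing them down through the covering map --- which is a local isometry away from $\gamma$, and in particular in a neighborhood of $E_i$ --- yields simplicial hyperbolic surfaces exiting $E_i$. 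I expect the genuinely delicate points to be: selecting the branch geodesic so that incompressibility of the lifted surface and the curvature bound on $\widetilde N$ are simultaneously achieved; verifying that the pushed-down maps retain the total-angle $\ge 2\pi$ condition at every vertex; and ruling out the possibility that the projected surfaces accumulate in the wrong component of $N\setminus{\rm int}(C)$ instead of exiting $E_i$.
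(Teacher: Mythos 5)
This survey does not actually prove the theorem---it only states it and cites \cite{canary-ends}---so the relevant comparison is with that reference. Your outline reproduces its strategy: the forward implication is essentially definitional; the incompressible case is handled by passing to the cover of $N$ associated to $\pi_1(S_i)$, applying Bonahon, and projecting the exiting surfaces down; and the compressible case is handled by a branched cover over a closed geodesic chosen so that the lifted end-bounding surface becomes incompressible while the total space still carries a metric of curvature $\le -1$. So the approach is the right one, but two of the steps you defer are not technicalities---they are the content of the theorem.

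First, the branch locus. It is not enough for $\gamma$ to ``link an essential compression'' of $S_i$: you need $S_i$ to be incompressible in $N-\gamma$, so \emph{every} compressing disk must meet $\gamma$, and $\gamma$ must be realizable (as a geodesic, or with a suitable negatively curved tube) compatibly with the metric on the branched cover. Producing such a curve is precisely where topological tameness enters: one uses the product structure $S_i\times[0,\infty)$ on the end, available by tameness together with \cite{MMS}, to locate a closed geodesic (or curve system) far out in the end whose complement has incompressible boundary. Your proposal never says how tameness yields this curve, and without it the compressible case collapses. Second, you cannot invoke ``Bonahon's theorem'' in $\widetilde N$: that theorem concerns manifolds of constant curvature $-1$, whereas $\widetilde N$ is a cone manifold (or, after smoothing, has pinched negative curvature). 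The bulk of \cite{canary-ends} consists of redoing the Thurston--Bonahon machinery---interpolation of simplicial hyperbolic surfaces, the bounded area and diameter estimates, and the dichotomy ``geometrically finite or exiting sequence of surfaces''---in this more general setting; treating that as a black box is the largest gap. Finally, in the incompressible case the assertion that the end of $\widehat N$ over $E_i$ fails to be geometrically finite ``because $E_i$ does'' itself requires an argument, the standard one being to extract closed geodesics exiting $E_i$ from the failure of geometric finiteness and lift them; and the homotopy condition in the definition of simple degeneracy must be checked for the projected surfaces, an issue you do at least flag.
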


Canary-Minsky \cite{canary-minsky} and Ohshika \cite{ohshika-limits} showed that,
in the absence of cusps,  strong limits of topologically tame hyperbolic 3-manifolds are themselves
topologically tame.
In order to more easily state this result, we define a representation $\rho:G\to\PSL_2(\C)$ to be
{\em purely hyperbolic} if  $\rho(G)$ contains no parabolic elements.

\begin{theorem}{\rm (Canary-Minsky \cite{canary-minsky},Ohshika\cite{ohshika-limits})}
Let $G$ be a finitely generated, torsion-free, non-abelian group.
If $\{\rho_n:G\to \PSL_2(\C)\}$ is a sequence of discrete, faithful, topologically tame, purely
hyperbolic representations
converging strongly to a purely hyperbolic representation $\rho:G\to\PSL_2(\C)$,
then $\rho(G) $ is topologically tame.
\end{theorem}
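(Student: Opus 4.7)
Write $N = \mathbb{H}^3/\rho(G)$ and $N_n = \mathbb{H}^3/\rho_n(G)$. By the Canary theorem just above, each $N_n$ is \emph{geometrically} tame, so every end of $N_n$ is either geometrically finite or simply degenerate; our goal is to verify the same dichotomy for the ends of $N$, which by that same theorem will force topological tameness. Choose a Scott compact core $C \subset N$. Since $\rho$ is purely hyperbolic, $\partial C$ consists of closed surfaces and the ends of $N$ are in bijection with its components. Fix one such end $E$ with boundary surface $S$.

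The engine of the proof is strong convergence, which yields (see \cite{CEG,BP}) the following almost-isometric approximation: for every $\epsilon > 0$ and every compact submanifold $K \subset N$ containing $C$, there exist $(1+\epsilon)$-bilipschitz embeddings $\phi_n : K \to N_n$, defined for all large $n$, inducing the isomorphism $\rho_n \circ \rho^{-1}$ on $\pi_1(C)$. Then $\phi_n(S)$ is incompressible in $N_n$ and isotopic to a boundary component of a compact core of $N_n$, so it cobounds with a unique end $E_n$ of $N_n$ a neighborhood of that end. After subsequencing we may assume all $E_n$ are of one type.

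If each $E_n$ is geometrically finite, then $E_n$ has a neighborhood disjoint from the convex core $C(N_n)$; pulling back by $\phi_n^{-1}$, using that strong convergence forces $C(N_n) \cap \phi_n(K)$ to Hausdorff-approximate $C(N) \cap K$, and exhausting $K \nearrow N$ produces a neighborhood of $E$ disjoint from $C(N)$, so $E$ is geometrically finite. If instead each $E_n$ is simply degenerate, take a sequence $\{f_{n,k} : S \to E_n\}$ of simplicial hyperbolic surfaces, homotopic to $\phi_n(S)$ inside $E_n$, exiting $E_n$. Choose a compact exhaustion $K_k \nearrow N$ and indices $n(k) \to \infty$ so that $\phi_{n(k)}$ is $(1+1/k)$-bilipschitz on $K_k$ and $f_{n(k),k}(S) \subset \phi_{n(k)}(K_k)$. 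Form $\phi_{n(k)}^{-1} \circ f_{n(k),k} : S \to N$ and straighten each triangulated face to a totally geodesic triangle in $N$ to obtain $g_k : S \to N$. For $k$ large, the almost-isometric distortion does not destroy the vertex angle-sum condition $\ge 2\pi$, so $g_k$ is a simplicial hyperbolic surface; it is homotopic to $S$ on the $E$-side of $C$, and $g_k(S)$ exits every compact subset of $N$ because $f_{n(k),k}(S)$ exits $E_{n(k)}$ in $N_{n(k)}$. Hence $E$ is simply degenerate.

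The main technical obstacle is the diagonal selection $n = n(k)$ combined with the straightening step in the simply degenerate case. One must simultaneously (i) choose the exiting simplicial hyperbolic surfaces far enough out in $E_n$ so that they lie in the controlled region $\phi_n(K_k)$; (ii) ensure that the resulting surfaces $g_k$ genuinely leave every compact subset of $N$; and (iii) verify that the vertex angle-sum inequality $\ge 2\pi$ survives the nearly-isometric pull-back. The hypothesis of strong (as opposed to merely algebraic) convergence, together with the absence of parabolics, is essential in (i)--(ii): without them the geometric limit could be strictly larger than $N$ and the exiting surfaces could be trapped in an extraneous piece of the geometric limit, never corresponding to an end of $N$ itself.
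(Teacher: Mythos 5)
The paper states this theorem without proof, deferring to the cited sources, so I am judging your argument on its own terms. It contains a fatal error in the case you label ``each $E_n$ is geometrically finite'': there you conclude that the corresponding end $E$ of the limit is geometrically finite, and this is false. The standard counterexample is the motivating example of the entire subject: a doubly degenerate surface group without parabolics (for instance the fiber subgroup of a closed hyperbolic $3$-manifold fibering over the circle) is a strong limit of quasi-Fuchsian groups without parabolics. Every end of every approximate is geometrically finite, yet both ends of the limit are simply degenerate. The failure mechanism is visible inside your own sentence: lower semicontinuity of limit sets under geometric convergence forces $C(N_n)\cap\phi_n(K)$ to eventually cover a neighborhood of $\phi_n(C(N)\cap K)$, and when $E$ is degenerate $C(N)\cap K$ eventually contains all of $K\cap E$; so the neighborhoods $U_n$ of $E_n$ disjoint from $C(N_n)$ recede to infinity, never meet the controlled region $\phi_n(K)$, and pull back to nothing. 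A limit of geometrically finite ends need not be geometrically finite --- that is precisely the hard case of the theorem, and your proof structure assumes it away.

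The dichotomy has to be run on the end $E$ of the \emph{limit}, not on the ends of the approximates. If $E$ has a neighborhood disjoint from $C(N)$, it is geometrically finite and there is nothing to prove. Otherwise there are closed geodesics $\gamma_k^*$ penetrating arbitrarily deep into $E$; strong convergence and the absence of parabolics let you transfer these to closed geodesics lying deep in the corresponding region of $N_{n}$ for $n=n(k)$ large; geometric tameness of $N_n$ (whether $E_n$ is geometrically finite or degenerate) then supplies simplicial hyperbolic surfaces in the homotopy class of $S$ passing within bounded distance of those geodesics, hence lying in $\phi_{n}(K_k)$ and deep in the end; these are the surfaces one pulls back and restraightens. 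Your Case 2 machinery --- the diagonal choice of $n(k)$, the bounded-diameter control needed to keep a deep surface inside the controlled region, and the observation that restraightening only slightly perturbs the angle-sum condition (which must be handled by tolerating curvature $\le -1+\delta$) --- is the right toolkit, but as written it is only ever applied to the half of the problem that does not contain the theorem's content.
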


By applying results of Anderson-Canary \cite{AClimits} or Ohshika \cite{ohshika-limits}
one can often guarantee that when both the approximates and the limit in an algebraically
convergent sequence are purely hyperbolic, then the sequence converges strongly. 

\begin{corollary}
Let $G$ be a finitely generated, torsion-free, non-abelian group.
If $\{\rho_n:G\to \PSL_2(\C)\}$ is a sequence of discrete, faithful, topologically tame, purely hyperbolic representations converging algebraically to a purely hyperbolic representation $\rho:G\to\PSL_2(\C)$
such that either $\Lambda(\rho(G))=\rs$ or $G$ is not a free product of surface groups,
then $\rho(G)$ is topologically tame.
\end{corollary}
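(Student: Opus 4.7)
The plan is to deduce the corollary from the preceding theorem of Canary-Minsky and Ohshika by upgrading algebraic convergence to strong convergence. Since the theorem already gives topological tameness of $\rho(G)$ under strong convergence (with purely hyperbolic approximates and purely hyperbolic limit, all topologically tame), all one needs is to verify, under the extra hypothesis that either $\Lambda(\rho(G))=\rs$ or $G$ is not a free product of surface groups, that an algebraically convergent sequence $\{\rho_n\}$ of discrete, faithful, topologically tame, purely hyperbolic representations with purely hyperbolic algebraic limit $\rho$ must in fact converge strongly.

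First I would pass, after extracting a subsequence, to a geometric limit. By the standard compactness for Kleinian groups (Jørgensen's inequality together with the assumption that $G$ is non-abelian), the sequence $\{\rho_n(G)\}$ subconverges geometrically to some Kleinian group $\hat\Gamma$ which contains $\rho(G)$; strong convergence is then equivalent to the equality $\hat\Gamma=\rho(G)$. The key input, which is the main obstacle, is to rule out the strict inclusion $\rho(G)\subsetneq\hat\Gamma$. This is precisely the content of the results of Anderson-Canary \cite{AClimits} and Ohshika \cite{ohshika-limits}: if an algebraically convergent sequence of discrete faithful representations with purely hyperbolic approximates has a purely hyperbolic algebraic limit, then either the sequence already converges strongly, or the algebraic limit exhibits a very restricted pathology, namely $G$ splits as a free product of surface groups and the geometric limit differs from the algebraic limit by the appearance of an accidental extra factor whose limit set is a proper subset of $\rs$.

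The hypothesis in the corollary is designed exactly to eliminate this pathology: if $G$ is not a free product of surface groups, the pathology does not occur and strong convergence is automatic; if $G$ is such a free product but $\Lambda(\rho(G))=\rs$, then there is no room for the geometric limit to strictly enlarge $\rho(G)$ while keeping the algebraic limit purely hyperbolic, since any extra geometric limit factor would force $\Lambda(\rho(G))$ to be a proper subset of $\rs$. In either case, one concludes that $\{\rho_n\}$ converges strongly to $\rho$.

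Once strong convergence is established, the previous theorem applies directly, since $\{\rho_n\}$ consists of discrete, faithful, topologically tame, purely hyperbolic representations and the limit $\rho$ is purely hyperbolic, giving that $\rho(G)$ is topologically tame. The step I expect to be the main obstacle is citing and verifying the precise hypothesis of the Anderson-Canary/Ohshika dichotomy that rules out accidental parabolic-free geometric limits; the remainder is a direct application of previously stated results.
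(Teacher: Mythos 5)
Your proposal is correct and follows exactly the route the paper intends: the sentence preceding the corollary states that the Anderson--Canary and Ohshika results upgrade algebraic to strong convergence when approximates and limit are purely hyperbolic (under the stated hypothesis ruling out the free-product-of-surface-groups pathology), after which the Canary--Minsky/Ohshika theorem on strong limits applies verbatim. Your identification of the geometric-limit dichotomy as the content of those cited results, and of the hypothesis as precisely what excludes the bad case, matches the paper's argument.
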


Evans \cite{evans} was able to significantly weaken the assumption that the representations
are purely hyperbolic.

\begin{theorem}{\rm (Evans\cite{evans})}
Let $G$ be a finitely generated, torsion-free, non-abelian group.
If $\{\rho_n:G\to \PSL_2(\C)\}$ is a sequence of discrete, faithful, topologically tame representations converging algebraically to $\rho:G\to\PSL_2(\C)$
such that 
\begin{enumerate}
\item
either $\Lambda(\rho(G))=\rs$ or $G$ is not a free product of surface groups, and
\item
if $g\in G$ and $\rho(g)$ is parabolic, then $\rho_n(g)$ is parabolic for all $n$,
\end{enumerate}
then $\rho(G)$ is topologically tame.
\end{theorem}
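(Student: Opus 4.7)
The plan is to reduce the result to the preceding Canary--Minsky--Ohshika corollary, which handles the purely hyperbolic case. Write $N_n = \H^3/\rho_n(G)$ and $N = \H^3/\rho(G)$. The crucial feature of hypothesis (2) is that every parabolic element of $\rho(G)$ is already parabolic under every $\rho_n$, so the ``limit cusps'' of $N$ persist as cusps throughout the approximating sequence. The strategy is then to open up all parabolic elements—both the persistent ones and any extra ones living only in the approximates—to loxodromic by a small deformation, apply the corollary in the resulting purely hyperbolic setting, and close the persistent cusps back up to recover tameness of $\rho(G)$.

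First I would pass to a subsequence so that $\{\rho_n(G)\}$ converges geometrically to a Kleinian group $\hat\Gamma \supseteq \rho(G)$, with $N$ covering $\hat N = \H^3/\hat\Gamma$. For each $n$, since $N_n$ is topologically tame, Scott's compact core theorem gives a relative compact core $(C_n, P_n)$ whose annulus and torus boundary pieces correspond to the maximal parabolic subgroups of $\rho_n(G)$. By hypothesis (2) I can mark, within each $P_n$, the subfamily coming from elements $g$ with $\rho(g)$ parabolic; these ``persistent'' pieces have uniformly controlled geometry and converge geometrically to cusp neighborhoods of $N$. The remaining pieces of $P_n$ correspond to extra parabolics.

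Next I would deform each $\rho_n$ slightly to a nearby discrete, faithful, topologically tame representation $\sigma_n$ that is purely hyperbolic: the extra parabolics of $\rho_n$ are opened to loxodromics by a small perturbation (staying within the tame locus of $\mathrm{Hom}(G,\PSL_2(\C))$), and the persistent parabolics are opened by a small perturbation applied simultaneously to $\rho_n$ and $\rho$ to produce a purely hyperbolic limit representation $\sigma$ with $\sigma_n \to \sigma$ algebraically. A diagonal choice makes $\sigma_n \to \sigma$ while keeping hypothesis (1) intact for $\sigma$ (neither having full limit set nor being a free product of surface groups is destroyed by a sufficiently small deformation). The Canary--Minsky--Ohshika corollary then yields topological tameness of $\sigma(G)$, and filling the persistent cusps back in transfers tameness to $\rho(G)$.

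The main obstacle is justifying the opening deformation in the last step while preserving tameness and algebraic convergence. In the geometrically finite case this is a standard consequence of Thurston's hyperbolic Dehn surgery theorem, which guarantees nearby geometrically finite structures in which any chosen parabolic becomes loxodromic. For a general topologically tame representation with potentially infinitely many degenerate ends, the required input is the more delicate drilling--filling machinery in the style of Brock--Bromberg, together with the continuity of the quasi-isometric distortion of the relevant deformations. Verifying that the deformations stay within the tame, discrete, faithful locus uniformly in $n$, and that hypothesis (1) persists to $\sigma$, is the technical heart of the argument.
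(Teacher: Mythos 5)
This is a survey article: the theorem is stated with attribution to Evans and no proof is given in the paper, so there is nothing internal to compare against line by line. Evans's actual argument runs in a different direction from yours: under the type-preserving hypothesis (2) together with (1), he shows (building on Anderson--Canary) that the algebraic limit is in fact a \emph{strong} limit, i.e. that $\{\rho_n(G)\}$ converges geometrically to $\rho(G)$, and then he extends the Canary--Minsky strong-limit tameness argument to the setting where cusps are present. He does not remove the parabolics; he exploits them to control the geometric limit.

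Your proposed reduction has a genuine gap at both ends of the ``open the cusps, then close them again'' manoeuvre. First, the existence of the perturbations $\sigma_n$: a topologically tame but geometrically infinite $\rho_n$ need not admit any nearby discrete, faithful, purely hyperbolic deformation. Thurston's hyperbolic Dehn surgery and the Brock--Bromberg drilling/filling theorems, which you invoke, are theorems about geometrically finite manifolds (with later extensions that themselves rely on tameness and the Ending Lamination machinery); there is no ``tame locus'' known to be open in $\mathrm{Hom}(G,\PSL_2(\C))$ in which you can move freely, and asserting that the deformations ``stay within the tame, discrete, faithful locus uniformly in $n$'' assumes essentially what must be proved. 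Second, and more fatally, the final step --- ``filling the persistent cusps back in transfers tameness to $\rho(G)$'' --- is itself a statement that tameness persists under a limiting/filling operation relating $\sigma$ to $\rho$; this is precisely the kind of persistence-under-limits assertion the theorem is about, so the difficulty has been relocated rather than resolved. A smaller but real issue: when $G$ \emph{is} a free product of surface groups, hypothesis (1) forces $\Lambda(\rho(G))=\rs$, and there is no reason this survives your perturbation to $\sigma$; fullness of the limit set is not stable under small deformations. The workable route is the one Evans takes: prove strong convergence from (1) and (2), then run the strong-limit argument with cusps.
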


Juan Souto \cite{souto-tame} showed that if $N$ can be exhausted by compact cores then it is topologically
tame. (In fact, he proves somewhat more, but we will just state the simpler version.) Kleineidam
and Souto \cite{kleineidam-souto-tame} used Souto's result to show that if a
Masur domain lamination on a boundary component is not realizable,
then the corresponding end is tame (see \cite{kleineidam-souto-tame} for definitions). Souto's
work was quite influential in the later solutions of Marden's Tameness Conjecture.

\begin{theorem}{\rm (Souto\cite{souto-tame})}
If $N$ is a hyperbolic 3-manifold with finitely generated fundamental group
and $N=\bigcup C_i$ where $C_i$ is a compact core for $N$ and
$C_i\subset C_{i+1}$ for all $i$, then $N$ is topologically tame
\end{theorem}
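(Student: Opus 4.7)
The plan is to apply Tucker's characterization of topological tameness (Theorem 3.1 above): it suffices to show that $\pi_1(N\setminus K)$ is finitely generated for every compact submanifold $K$ of $N$. Given such a $K$, the exhaustion hypothesis together with compactness forces $K\subset C_i$ for some $i$, so it is enough to prove that $\pi_1(N\setminus C_i)$ is finitely generated. I would analyze the nested shells $W_j=\overline{C_{j+1}\setminus C_j}$ for $j\ge i$, which telescope to cover $N\setminus\mathrm{int}(C_i)$, and try to show that, after possibly passing to a subsequence and modifying cores, each $W_j$ is homeomorphic to $\partial C_j\times[0,1]$.

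The key inputs are that (a) the inclusion $C_j\hookrightarrow C_{j+1}$ is a homotopy equivalence, since both are compact cores of $N$; (b) $C_{j+1}$ is irreducible, being a codimension-$0$ submanifold of the irreducible manifold $N$; and (c) $\pi_1(W_j)$ is carried by $\pi_1(\partial C_j)$, by a Seifert--van Kampen argument using (a). When $\partial C_j$ is incompressible in $W_j$, Waldhausen's cobordism theorem (in the form used in McCullough--Miller--Swarup) then yields the product structure. Chaining these products produces a homeomorphism $N\setminus\mathrm{int}(C_i)\cong\partial C_i\times[0,\infty)$, which has the finitely generated fundamental group $\pi_1(\partial C_i)$, and in fact identifies $N$ with $\mathrm{int}(C_i)$.

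The main obstacle is the possibility that components of $\partial C_j$ are compressible in $W_j$, in which case $W_j$ is not literally a product. To handle this, I would enlarge $C_j$ inside $C_{j+1}$ by attaching $2$-handles along a maximal system of disjoint essential compressing disks for $\partial C_j$ in $W_j$, producing an enlarged compact core $C_j'$ with $C_j\subset C_j'\subset C_{j+1}$. Haken's finiteness theorem for disjoint essential surfaces in a compact irreducible $3$-manifold guarantees that this process terminates, after which $\partial C_j'$ is incompressible in its outer shell. A diagonal argument against the original exhaustion produces a cofinal subexhaustion $\{C_j'\}$ whose consecutive shells are all products, and this is the step where the structure theorems for cores of irreducible $3$-manifolds of McCullough--Miller--Swarup do the essential work. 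Once this telescoping product structure is in place, Tucker's criterion immediately gives that $N$ is topologically tame.
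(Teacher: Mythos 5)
Your argument is purely topological: nowhere do you use the hyperbolic metric on $N$. That alone is fatal, because the purely topological statement ``an irreducible open $3$-manifold exhausted by nested compact cores is tame'' is false, and the paper itself exhibits the counterexample in Section 3: there $H_\infty=\bigcup H_n$ with each $H_n$ a compact core, $H_n\subset H_{n+1}$, and $H_\infty$ not topologically tame. Any proof of Souto's theorem must invoke the hyperbolic structure (Souto's actual argument, which this survey only cites, does: roughly, he uses the nested cores to produce simplicial hyperbolic surfaces exiting each end, showing the ends are geometrically finite or simply degenerate, and then invokes the equivalence of geometric and topological tameness).

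The precise step where your argument breaks is claim (c). Seifert--van Kampen gives the pushout $\pi_1(C_{j+1})=\pi_1(C_j)*_{\pi_1(\partial C_j)}\pi_1(W_j)$, but to deduce from the surjectivity of $\pi_1(C_j)\to\pi_1(C_{j+1})$ that $\pi_1(W_j)$ is carried by $\pi_1(\partial C_j)$ you need the normal form theorem for amalgamated products, which requires the edge map $\pi_1(\partial C_j)\to\pi_1(C_j)$ to be injective. When the core is a handlebody or compression body (i.e.\ whenever $\pi_1(N)$ is freely decomposable -- exactly the hard case of the Tameness Conjecture) this fails, and (c) is simply false. The Section 3 example makes this concrete: each shell $B=\overline{H_{n+1}\setminus H_n}$ has $\partial H_n$ incompressible in $B$ (so your $2$-handle surgery step finds nothing to compress and changes nothing), yet $\pi_1(\partial H_n)$ is a \emph{proper} subgroup of $\pi_1(B)$, Waldhausen's theorem does not apply for lack of $\pi_1$-surjectivity, the shell is not a product, and $\pi_1(H_\infty\setminus H_0)$ is infinitely generated. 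So the reduction to Tucker's criterion is fine, but the core of your proof does not survive contact with the freely decomposable case, which is the only case in which the theorem is not already covered by Bonahon's work.
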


Brock, Bromberg, Evans and Souto \cite{BBES} were able to show that ``most''
algebraic limits of geometrically finite hyperbolic 3-manifolds are topologically tame.

\begin{theorem}{\rm (Brock-Bromberg-Evans-Souto \cite{BBES})}
Let $G$ be a finitely generated, torsion-free, non-abelian group.
If $\{\rho_n:G\to \PSL_2(\C)\}$ is a sequence of discrete, faithful, geometrically finite representations converging algebraically to $\rho:G\to\PSL_2(\C)$
such that either
\begin{enumerate}
\item
$\Lambda(\rho(G))=\rs$,
 \item
 $G$ is not a free product of surface groups, or
\item
$\{\rho_n(G)\}$ converges geometrically to $\rho(G)$,
\end{enumerate}
then $\rho(G)$ is topologically tame.
\end{theorem}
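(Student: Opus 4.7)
The plan is to reduce to Evans' theorem by modifying the approximating sequence through controlled drilling so that the parabolic locus of $\rho_n$ matches that of $\rho$. Let $P\subseteq G$ denote the set of elements $g$ with $\rho(g)$ parabolic. Evans' theorem handles precisely the case in which every $g\in P$ is already parabolic under each $\rho_n$, so the entire difficulty lies in the \emph{new parabolics}: conjugacy classes $[g]$ with $g\in P$ but $\rho_n(g)$ loxodromic for infinitely many $n$. For such a $g$, algebraic convergence forces the translation length $\ell_{\rho_n}(g)\to 0$, so $g$ is represented by an arbitrarily short closed geodesic in $N_n=\H^3/\rho_n(G)$ for all sufficiently large $n$.

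By Ahlfors' finiteness theorem applied to $\rho(G)$, the new parabolic conjugacy classes form a finite collection, each represented by a simple closed curve in a compact core for $N=\H^3/\rho(G)$. For each large $n$, the Brock-Bromberg drilling theorem, applied simultaneously to the (arbitrarily short) geodesic representatives of these curves in $N_n$, produces a new geometrically finite representation $\rho_n':G\to\PSL_2(\C)$ in which the offending curves have been pinched to cusps, together with a bilipschitz map between the thick parts of $N_n$ and $N_n'=\H^3/\rho_n'(G)$ whose distortion tends to $1$ as the drilled lengths shrink. Consequently $\rho_n'\to\rho$ algebraically, and under hypothesis (3) the geometric limit is also preserved, so $\rho_n'\to\rho$ strongly. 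By construction, $\rho(g)$ parabolic now implies $\rho_n'(g)$ parabolic for all sufficiently large $n$.

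In cases (1) and (2), the hypotheses on $\rho$ and $G$ are inherited verbatim by the drilled sequence $\{\rho_n'\}$, so Evans' theorem applies directly and yields topological tameness of $\rho(G)$. In case (3) we have, after drilling, a strongly convergent sequence of tame representations with matched parabolic loci; the Canary-Minsky/Ohshika strong-limit argument then extends to this parabolic setting (the matched cusps cancel under an end-to-end comparison of geometrically tame ends via simplicial hyperbolic surfaces), again yielding the conclusion.

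The main obstacle is executing the simultaneous drilling: the short geodesics in $N_n$ may have lengths shrinking at very different rates, and one must show that they can all be pinched to cusps by a single controlled deformation whose result converges to the correct limit $\rho$, rather than to some other cusp filling compatible with the topological data. This requires the full cone-manifold deformation machinery of Brock and Bromberg, together with geometric inflexibility estimates, to place $\rho_n'$ in the correct component of the geometrically finite deformation space of representations of $G$ with parabolic locus $P$.
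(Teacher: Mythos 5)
The survey states this result without proof---it is quoted from Brock--Bromberg--Evans--Souto \cite{BBES}---so there is no internal argument to compare against; what can be said is that your sketch reproduces the strategy of the cited paper. The reduction you describe (isolate the finitely many conjugacy classes that become newly parabolic in the limit, observe that algebraic convergence forces their geodesic representatives in $N_n$ to become arbitrarily short, apply the Brock--Bromberg drilling theorem \cite{brock-bromberg} to replace $\rho_n$ by a geometrically finite $\rho_n'$ in which those curves are pinched to cusps, and then feed the resulting type-preserving sequence into Evans' theorem \cite{evans}) is indeed how the theorem is proved, and you have correctly located the crux in the drilling step.

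Two caveats about where the sketch asserts more than it establishes. First, the drilling theorem gives, for each $n$, a bilipschitz comparison between complements of Margulis tubes with constants tending to $1$; promoting this to the statement that $\rho_n'\to\rho$ algebraically---to \emph{this} limit, with compatible markings, rather than merely subconverging to some representation with the correct parabolic locus---is a genuine step of \cite{BBES} and does not follow from the bilipschitz estimate alone. Your final paragraph flags this but does not carry it out. Second, in case (3) the claim that the Canary--Minsky/Ohshika argument ``extends to this parabolic setting'' via an end-to-end comparison is precisely the content of Evans' theorem on type-preserving strong limits, and to invoke it you must also verify that drilling preserves geometric convergence of $\rho_n'(G)$ to $\rho(G)$; both points rest on the inflexibility estimates rather than on anything supplied in your outline. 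Modulo these two points, which are furnished by the machinery you cite, the proposal is sound and coincides with the argument of the original source.
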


Brock and Souto \cite{brock-souto} were able to resolve the remaining cases to prove that all
algebraic limits of geometrically finite hyperbolic 3-manifolds are topologically tame.

\begin{theorem}{\rm (Brock-Souto\cite{brock-souto})}
Let $G$ be a finitely generated, torsion-free, non-abelian group.
If $\{\rho_n:G\to \PSL_2(\C)\}$ is a sequence of discrete, faithful, geometrically finite representations converging algebraically to $\rho:G\to\PSL_2(\C)$,
then $\rho(G)$ is topologically tame.
\end{theorem}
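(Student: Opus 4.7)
The plan is to reduce to the one remaining case not covered by the Brock-Bromberg-Evans-Souto theorem and attack it using Souto's exhaustion theorem.

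By Brock-Bromberg-Evans-Souto, we may assume that $\Lambda(\rho(G))\neq\rs$, that $G=\pi_1(S_1)*\cdots*\pi_1(S_k)$ is a free product of closed surface groups, and that, after passing to a subsequence, $\{\rho_n(G)\}$ converges geometrically to some Kleinian group $\hat\Gamma$ that \emph{strictly} contains $\rho(G)$. Thus $N_\rho=\H^3/\rho(G)$ is a nontrivial cover of the geometric limit manifold $\hat N=\H^3/\hat\Gamma$, and $N_n=\H^3/\rho_n(G)$ Gromov-converges to $\hat N$.

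First I would treat each free factor separately. Each $\rho|_{\pi_1(S_j)}$ is an algebraic limit of geometrically finite representations of a freely indecomposable surface group, so Bonahon's theorem applies and yields that $N_j=\H^3/\rho(\pi_1(S_j))$ is geometrically tame. This provides, for each $j$, a compact core $C_j\subset N_j$ and simplicial hyperbolic surfaces exiting each degenerate end. Via the cover $N_j\to N_\rho$ induced by the inclusion $\pi_1(S_j)\hookrightarrow G$, this places a controlled geometric structure for each factor inside $N_\rho$.

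The main step is to construct an exhaustion $C_1\subset C_2\subset\cdots\subset N_\rho$ of $N_\rho$ by compact cores, so that Souto's exhaustion theorem applies. Using geometric convergence, for each large $n$ one obtains nearly isometric embeddings $\phi_n$ of larger and larger compact submanifolds of $\hat N$ into $N_n$; since $N_n$ is geometrically finite, it carries a compact core $K_n$ respecting the free product decomposition of $G$. Pulling the relevant piece of $K_n$ back through $\phi_n^{-1}$ and then lifting along the cover $N_\rho\to\hat N$ yields a candidate compact submanifold $C_m\subset N_\rho$. To verify $\pi_1$-surjectivity onto $\rho(G)$, I would use the Kleineidam-Souto realizability criterion: any Masur-domain lamination on a boundary component of $C_m$ that failed to be realized in $\rho(G)$ would already force the corresponding end of $N_\rho$ to be tame, and combining this with the factor-by-factor information of Bonahon fills in the remaining homotopy data.

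The hardest point is ensuring that the $C_m$ actually exhaust all of $N_\rho$, rather than only the sheet of the cover $N_\rho\to\hat N$ closest to the basepoint used for geometric convergence. Because $\hat\Gamma$ strictly contains $\rho(G)$, there are deck transformations of $N_\rho\to\hat N$ whose lifts can carry the $C_m$ away from each other, so a priori the constructed submanifolds might fail to meet every compact set of $N_\rho$. Overcoming this requires tracking the individual free factors $\pi_1(S_j)$ through the geometric convergence and showing that every end of $N_\rho$ is ``seen'' by at least one factor, either as a geometrically finite end or as an end carrying an unrealized Masur-domain lamination. Once exhaustion is verified, Souto's theorem immediately concludes that $\rho(G)$ is topologically tame.
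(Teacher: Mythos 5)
The paper does not prove this theorem; it is a survey and simply attributes the result to Brock--Souto, so there is no in-paper argument to compare against. Judged on its own merits, your proposal correctly identifies the right starting point (reduce, via Brock--Bromberg--Evans--Souto, to the case where $\Lambda(\rho(G))\ne\rs$, $G$ is a free product of surface groups, and the geometric limit $\hat\Gamma$ strictly contains $\rho(G)$) and the right ingredients (Bonahon applied to the freely indecomposable factors, Kleineidam--Souto's result on non-realized Masur-domain laminations, and Souto's exhaustion criterion). These are indeed the tools Brock and Souto use.

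However, the proposal has a genuine gap precisely at the step you flag as ``the hardest point,'' and naming the difficulty is not the same as overcoming it. The whole reason the free-product case was left open is the wrapping phenomenon of Anderson--Canary type: a compact core of $N_n$, transported into the geometric limit by the almost-isometric embeddings $\phi_n$, need not lift to a compact core of $N_\rho$, because the algebraic limit can sit in the geometric limit in a knotted or wrapped fashion and the $\pi_1$-image of the transported core in $\hat\Gamma$ need not be $\rho(G)$ at all. Your proposed fix --- invoking the Kleineidam--Souto realizability criterion to ``verify $\pi_1$-surjectivity'' of the candidate cores --- is not a valid use of that result: it asserts tameness of an end when a Masur-domain lamination on a core boundary component is unrealized, but it says nothing about whether a given candidate submanifold carries all of $\rho(G)$, nor about whether the candidates exhaust $N_\rho$ rather than drifting apart under deck transformations of $N_\rho\to\hat N$. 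The substance of the Brock--Souto paper is an unknotting/unlinking argument, exploiting geometric finiteness of the approximates (via their convex cores) to pin down the position of the algebraic limit inside the geometric limit; without some version of that argument your construction does not close, so what you have is an accurate outline of the problem rather than a proof.
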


In 2004, Ian Agol and the team of Danny Calegari and David Gabai announced proofs
of Marden's Tameness Conjecture.

\medskip\noindent
{\bf Tameness Theorem:}
{\rm (Agol\cite{agol},Calegari-Gabai\cite{calegari-gabai})}
{\em If $N$ is a hyperbolic 3-manifold with finitely generated fundamental group,
then $N$ is topologically tame.}

\medskip

Soma \cite{soma} later simplified the proof by combining ideas of Agol
and Calegari-Gabai. Bowditch \cite{bowditch-tame} also gives an account of the
result using ideas of Agol, Calegari-Gabai and Soma. He also describes how to
generalize the proof for manifolds of pinched negative curvature and uses it to
prove an analogue of Ahlfors' Finiteness Theorem in this setting. Choi \cite{choi} has an alternate approach to the proof.

\section{Geometric Applications}

In the remaining sections we will collect some of the major applications
and consequences of the solution of Marden's Tameness Conjecture.

Geometric tameness gives one strong control on the geometry of ends.
One manifestation of this control is a minimum principle for superharmonic
functions which Thurston \cite{thurston-notes} established for geometrically tame
hyperbolic 3-manifolds with incompressible boundary and Canary \cite{canary-ends}
generalized to the setting of topologically tame hyperbolic 3-manifolds. Given
the Tameness Theorem one need only require that our
manifold have finitely generated fundamental group.

\begin{theorem}{(Thurston \cite{thurston-notes},Canary\cite{canary-ends})}
Let $N$ be a hyperbolic 3-manifold with finitely generated fundamental group.
If $h:N\to (0,\infty)$ is a positive superharmonic function, i.e. ${\rm div}(\grad \ h)\ge0$,
then 
$${\rm inf}_{C(N)} h={\rm inf}_{\partial C(N)} h.$$
In particular, if $C(N)=N$, then $h$ is constant.
\end{theorem}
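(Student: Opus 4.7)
The plan is to reduce the statement to the classical minimum principle on compact domains, using a compact exhaustion of $C(N)$ whose boundary combines $\partial C(N)$ with simplicial hyperbolic surfaces exiting the simply degenerate ends.

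By the Tameness Theorem together with the equivalence of topological and geometric tameness stated above, $N$ is geometrically tame: each end is either geometrically finite or simply degenerate. I treat the no-cusps case, the cusped case requiring only the standard modifications at horoball neighborhoods. The geometrically finite ends have neighborhoods disjoint from $C(N)$ and contribute the components of $\partial C(N)$, while the simply degenerate ends lie inside $C(N)$ and are what prevent $C(N)$ from being compact.

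For each simply degenerate end $E_j$ of $N$, the definition of geometric tameness supplies a sequence of simplicial hyperbolic surfaces $f_n^j: S_j \to N$ homotopic to a cross-section of $E_j$ and exiting $E_j$. After a small perturbation to embedded surfaces $\Sigma_n^j$ cutting off an end-neighborhood of $E_j$, let $K_n \subset C(N)$ denote the compact submanifold obtained by deleting these end-neighborhoods. Then $K_n \subset K_{n+1}$, $\bigcup_n K_n = C(N)$, and $\partial K_n \subset \partial C(N) \cup \bigcup_j \Sigma_n^j$. The classical minimum principle on the compact manifold $K_n$ yields
\[
\inf_{K_n} h \;=\; \inf_{\partial K_n} h \;=\; \min\Bigl(\inf_{\partial C(N)\cap K_n} h,\ \min_j \inf_{\Sigma_n^j} h\Bigr),
\]
and since $\inf_{K_n} h \searrow \inf_{C(N)} h$, if the right-hand minimum is attained on $\partial C(N)$ for infinitely many $n$ the desired equality follows at once.

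The main obstacle is to rule out the opposite case, where $\inf_{\Sigma_n^j} h < \inf_{\partial C(N)} h$ for all large $n$, with these infima converging to $\inf_{C(N)} h$ from below. This is where the geometric content of simplicial hyperbolic surfaces is essential: each $\Sigma_n^j$ carries an intrinsic metric of curvature $\le -1$, so by Gauss--Bonnet its area is bounded by $2\pi|\chi(S_j)|$, and the diameter of each component of its thick part is uniformly controlled. Choosing base points $p_n \in \Sigma_n^j$ at which $h$ almost attains $\inf_{C(N)} h$, I would pass to a pointed geometric limit of the surfaces after lifting suitably, obtaining a limit situation in which the pullback of $h$, still positive and superharmonic on a neighborhood in $\H^3$, attains its infimum at the limit base point. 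The strong minimum principle would then force $h$ to be locally constant near the image of the limit point in $N$, and propagating this via the strong minimum principle on $N$ itself would make $h$ constant, contradicting the strict inequality $\inf_{C(N)} h < \inf_{\partial C(N)} h$. Carrying out the limit-extraction and the propagation cleanly, using the uniform area and injectivity-radius bounds on simplicial hyperbolic surfaces, is the technical core of the argument, and is precisely where geometric (rather than merely topological) tameness is indispensable.
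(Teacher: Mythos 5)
Your compact exhaustion correctly isolates the hard case, but the mechanism you propose for resolving it does not close. Suppose, as you fear, that $\inf_{\partial K_n}h$ is achieved on the exiting surfaces with values strictly below $\inf_{\partial C(N)}h$ and tending to $m=\inf_{C(N)}h$. Your pointed geometric limit produces a positive superharmonic function on the limit object attaining the value $m$ at an interior point, hence constant there; but this says nothing about $h$ on $N$ beyond the fact that $h$ is \emph{nearly} constant, close to $m$, on balls of any fixed radius about $p_n$ for $n$ large. That conclusion is perfectly consistent with the very scenario you need to exclude, namely $h$ decreasing monotonically out the simply degenerate end to a limit $m<\inf_{\partial C(N)}h$. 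Since $h$ never actually attains its infimum at a point of $N$, the strong minimum principle on $N$ is never triggered, and there is nothing to ``propagate''; no contradiction results. So the technical core you defer is not a matter of carrying out a limit extraction cleanly --- the limit argument is structurally incapable of producing the contradiction.

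The paper's argument uses the exiting simplicial hyperbolic surfaces in a genuinely different way: not as boundaries on which to apply a compact minimum principle, but as volume bottlenecks for the flow generated by $-\grad\,h$. Superharmonicity makes this flow volume non-decreasing; positivity of $h$ forces $\int_0^\infty|\grad h|^2(\phi_t(x))\,dt\le h(x)$, so the flow slows down; and the radius-one neighborhoods of the surfaces $f_n(S)$ have uniformly bounded volume (via the area bound $2\pi|\chi(S)|$), so they act as narrows through which a volume non-decreasing, decelerating flow cannot push a set of positive measure infinitely often. Hence almost every flow line starting in $C(N)$ exits through $\partial C(N)$, and since $h$ decreases along flow lines, $\inf_{C(N)}h=\inf_{\partial C(N)}h$. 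If you want to repair your write-up, you must replace the limit-and-propagate step with an argument of this quantitative, measure-theoretic type (or some equivalent, e.g.\ a Cheeger-constant or harmonic-measure estimate for the degenerate ends); the uniform area bound on the surfaces is the input, but it must be fed into a volume-transport estimate rather than into a compactness argument.
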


The main idea of the proof here is to consider the flow generated by $-\grad \ h$, i.e. the
flow in the direction of maximal decrease. The fact that $h$ is superharmonic
guarantees that this flow is volume non-decreasing. The fact that $h$ is positive guarantees
that the flow moves more and more slowly as one progress. Neighborhoods of radius
one of our simplicial hyperbolic surfaces have bounded volume, so act as narrows for
the flow. It follows that almost every flow line starting in $C(N)$ must exit the convex
core through its boundary, rather than flowing out one of the simply degenerate ends.

One of the most important applications of this minimum principle is a solution of
Ahlfors' Measure Conjecture. Ahlfors \cite{ahlfors-AMC} proved that a limit set
of a geometrically finite hyperbolic 3-manifold either has measure zero or
is the entire Riemann sphere.  He conjectured that this would hold for all
hyperbolic 3-manifolds with finitely generated fundamental group.

\begin{corollary} If $N=\H^3/\Gamma$ is a hyperbolic 3-manifold with finitely
generated fundamental group, then either $\Lambda(\Gamma)$ has measure
zero in $\rs$ or $\Lambda(\Gamma)=\rs$. Moreover, if $\Lambda(\Gamma)=\rs$
then $\Gamma$ acts ergodically on $\rs$, i.e. if $A\subset\rs$ is measurable and
$\Gamma$-invariant, then $A$ has either measure zero or full measure.
\end{corollary}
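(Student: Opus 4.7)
My plan is to convert any measurable $\Gamma$-invariant $A\subset\rs$ into a bounded harmonic function on $N$, and then feed this function into the minimum principle of the preceding theorem. For $x\in\H^3$, let $\mu_x$ denote visual (harmonic) measure on $\rs$ and set $h_A(x)=\mu_x(A)$. Then $h_A$ is $\Gamma$-invariant, bounded harmonic on $\H^3$, and takes values in $[0,1]$, so it descends to a bounded harmonic $h_A\colon N\to[0,1]$; Fatou's theorem identifies the nontangential boundary values of $h_A$ with $\mathbf{1}_A$ almost everywhere on $\rs$.

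For ergodicity, I assume $\Lambda(\Gamma)=\rs$, so that $CH(\Lambda(\Gamma))=\H^3$ and hence $C(N)=N$. For any measurable $\Gamma$-invariant $A$, the function $h_A+\varepsilon$ is positive superharmonic, and the ``in particular'' clause of the minimum principle forces it to be constant; letting $\varepsilon\to 0^+$ yields $h_A\equiv c\in[0,1]$. Fatou then gives $\mathbf{1}_A\equiv c$ a.e., so $c\in\{0,1\}$ and $A$ is either null or co-null.

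For the dichotomy, I take $A=\Lambda:=\Lambda(\Gamma)$, assume $\Lambda\ne\rs$, and aim to show $m(\Lambda)=0$; write $h=h_\Lambda$. The key geometric input is the a priori bound $h\le\tfrac12$ on $\partial C(N)$: any lift $\tilde x\in\partial CH(\Lambda)$ lies on some support hyperplane $H$ of $CH(\Lambda)$, and the open round disk on $\rs$ bounded by $\partial_\infty H$ on the far side of $H$ from $CH(\Lambda)$ is disjoint from $\Lambda$ and hence lies in $\Omega(\Gamma)$; since the visual measure from $\tilde x$ of a closed hemisphere of $\rs$ equals $\tfrac12$ by the reflection symmetry across $H$, this forces $h(\tilde x)\le\tfrac12$. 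Applying the minimum principle to $1-h=h_{\Omega(\Gamma)}$, which is positive superharmonic on $N$ because $\Omega(\Gamma)\ne\emptyset$, yields $\sup_{C(N)}h=\sup_{\partial C(N)}h\le\tfrac12$.

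To finish, each component $U$ of $N\setminus C(N)$ is a neighborhood of a geometrically finite end; its closure inside $\hat N=N\cup\partial_c(N)$ meets $\partial_c(N)$ along a surface where $h$ extends continuously by $0$ (since $\partial_c(N)\subset\Omega(\Gamma)$ is disjoint from $\Lambda$), while the rest of $\partial U$ lies in $\partial C(N)$, where $h\le\tfrac12$. The classical maximum principle on $\overline U\subset\hat N$ then gives $h\le\tfrac12$ on $U$, so $h\le\tfrac12$ throughout $N$, and by $\Gamma$-invariance on all of $\H^3$. If $m(\Lambda)>0$, then Lebesgue density combined with Fatou produces a positive-measure set of $\xi\in\Lambda$ at which the nontangential limit of $h$ equals $1$, contradicting $h\le\tfrac12$; hence $m(\Lambda)=0$. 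The main technical obstacle I expect is cleanly verifying that no component of $N\setminus C(N)$ accumulates on a simply degenerate end (so that the final maximum principle is applied over a set bounded only by pieces of $\partial C(N)$ and $\partial_c(N)$), and treating cusps by a standard removable-singularities argument for bounded harmonic functions near rank-one and rank-two cusps.
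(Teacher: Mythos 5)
Your proposal is correct and follows essentially the same route as the paper (Ahlfors' original argument: the visual-measure harmonic function $h$, the bound $h\le\tfrac12$ away from the convex core via support planes, the minimum principle applied to $1-h$, and a Fatou/Lebesgue-density contradiction, with ergodicity obtained by running the same device on $h_A$ when $C(N)=N$). The one divergence is your final maximum-principle step on $\overline U\subset\hat N$, which is unnecessary: the same separating-hyperplane argument you use on $\partial CH(\Lambda)$ applies verbatim to any $\tilde x\notin{\rm int}(CH(\Lambda))$ (the limit set then lies in a closed round disk of visual measure at most $\tfrac12$ from $\tilde x$), giving $h\le\tfrac12$ on all of $N\setminus{\rm int}(C(N))$ directly --- which is exactly what the paper asserts --- and sidestepping your worries about simply degenerate ends and cusps.
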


The proof of this corollary follows the same outline as Ahlfors' original proof.
We suppose that $\Lambda(\Gamma)\ne\rs$ and has positive measure and consider the harmonic
function $\tilde h:\H^3\to (0,1)$ given by letting $\tilde h(x)$ be the proportion of geodesic
rays emanating from $x$ which end at points in the limit set $\Lambda(\Gamma)$.
The function $\tilde h$ is $\Gamma$-invariant, so descends to a harmonic function
$h:N\to (0,1)$.
It is clear that $h\le {1\over 2}$ on $N-C(N)$, so the minimum principle applied
to $1-h$ implies that $h\le {1\over 2}$ on all of $N$. Therefore, $\tilde h\le {1\over 2}$ on $\H^3$.
On the other hand, as $x$ approaches a point of density of $\Lambda(\Gamma)$
along a geodesic, it is clear that $h(x)$ must approach 1, so we have achieved a contradiction.
(To establish the ergodicity of the action in the case that $\Lambda(\Gamma)=\rs$, one assumes
that $A$ is a $\Gamma$-invariant set which has neither full or zero measure and studies
the function $\tilde h$ which is the proportion of rays emanating from a point which
end in $A$.)

Another immediate consequence of this minimum principle is a characterization of
which hyperbolic 3-manifolds admit non-constant positive superharmonic functions.

\begin{corollary}
Let $N=\H^3/\Gamma$ be a hyperbolic 3-manifold with finitely generated fundamental
group. The manifold $N$ is strongly parabolic  (i.e. admits no non-constant positive
superharmonic functions) if and only if $\Lambda(\Gamma)=\rs$.
\end{corollary}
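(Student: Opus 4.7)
The forward direction is immediate from the preceding theorem. If $\Lambda(\Gamma)=\rs$ then $CH(\Lambda(\Gamma))=\H^3$, so the convex core $C(N)=CH(\Lambda(\Gamma))/\Gamma$ equals $N$, and the minimum principle forces every positive superharmonic function on $N$ to be constant. Hence $N$ is strongly parabolic.

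For the converse I would suppose $\Lambda(\Gamma)\neq \rs$, so $\Omega(\Gamma)$ is a non-empty open $\Gamma$-invariant subset of $\rs$, and produce a bounded, positive, non-constant, $\Gamma$-invariant harmonic function $\tilde h$ on $\H^3$. Such a function descends to a positive harmonic (hence superharmonic) function $h$ on $N$, witnessing that $N$ is not strongly parabolic. The construction is by Poisson integration: since $\partial_c N=\Omega(\Gamma)/\Gamma$ is a non-empty Riemann surface (of finite hyperbolic area by Ahlfors finiteness), I can choose a Borel subset $B\subsetneq \partial_c N$ with both $B$ and $\partial_c N\setminus B$ of positive Lebesgue measure, and pull back to a $\Gamma$-invariant Borel set $A\subset \Omega(\Gamma)$ with both $A$ and $\rs\setminus A$ of positive measure. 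Let $\tilde h : \H^3\to (0,1)$ be the Poisson integral of $\chi_A$. By $\Gamma$-invariance of $A$, $\tilde h$ is $\Gamma$-invariant, so it descends to a positive harmonic $h : N\to (0,1)$.

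The remaining step is to see that $\tilde h$ is non-constant, and here I would appeal to Fatou's classical theorem on non-tangential boundary values in the ball model of $\H^3$: the Poisson integral of a bounded measurable function attains its boundary data non-tangentially at almost every point. Consequently, at Lebesgue density points of $A$ (which exist because $|A|>0$) the limit is $1$, and at Lebesgue density points of $\rs\setminus A$ (which exist because $|\rs\setminus A|>0$) the limit is $0$; so $\tilde h$ takes values arbitrarily close to both $0$ and $1$ and is not constant.

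The main obstacle, such as it is, is invoking Fatou's theorem for a merely measurable $\Gamma$-invariant boundary function rather than a continuous one; but this is a standard result and requires no input from the Tameness Theorem. The Tameness Theorem enters only in the forward direction, through the minimum principle of the preceding theorem, which is the only ingredient of the argument that depends on $\Gamma$ being an arbitrary finitely generated Kleinian group rather than a geometrically finite one.
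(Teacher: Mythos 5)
Your proof is correct. The forward direction is exactly the paper's: $\Lambda(\Gamma)=\rs$ gives $C(N)=N$, and the minimum principle of the preceding theorem (the only place tameness enters) forces every positive superharmonic function to be constant.

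The paper labels the whole corollary an ``immediate consequence'' of the minimum principle and never writes out the converse, so that half is your own; but your construction is the natural one and uses precisely the machinery the paper deploys two paragraphs earlier for Ahlfors' Measure Conjecture, namely the Poisson integral of a $\Gamma$-invariant characteristic function together with Fatou's theorem on nontangential boundary values. The one wrinkle you handle correctly is that the naive choice $A=\Omega(\Gamma)$ fails when $\Lambda(\Gamma)$ has measure zero (the harmonic measure of $\Omega(\Gamma)$ is then identically $1$), which is why the splitting of $\partial_c N$ into two positive-measure pieces, pulled back to a $\Gamma$-invariant $A\subsetneq\Omega(\Gamma)$ with $|A|>0$ and $|\rs\setminus A|>0$, is genuinely needed. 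For the record, there is an alternative classical route to the converse that avoids Fatou: a Kleinian group with $\Omega(\Gamma)\neq\emptyset$ is of convergence type (sum the Poisson--kernel shadows over a precisely invariant ball in $\Omega(\Gamma)$), so $\sum_{\gamma\in\Gamma}G_{\H^3}(x,\gamma y_0)$ converges and descends to a Green's function on $N$, which is a non-constant positive superharmonic function. Your bounded-harmonic-function argument actually proves slightly more (non-ergodicity of the boundary action when $\Lambda(\Gamma)\neq\rs$), and either version is a complete proof of the stated converse.
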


Sullivan \cite{sullivan-flow} showed that the geodesic flow of $N$ is ergodic if and only if it admits a (positive)
Green's function, so one can also completely characterize when
the geodesic flow of $N$ is ergodic.

\begin{corollary}
Let $N=\H^3/\Gamma$ be a hyperbolic 3-manifold with finitely generated fundamental
group. The geodesic flow of $N$ is ergodic  if and only if $\Lambda(\Gamma)=\rs$.
\end{corollary}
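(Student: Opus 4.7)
The plan is to chain the preceding corollary together with Sullivan's stated criterion, so that this corollary reduces to a short bookkeeping argument.

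By the previous corollary (which rests on the minimum principle and, ultimately, on the Tameness Theorem), $N$ is strongly parabolic---equivalently, admits no non-constant positive superharmonic function---precisely when $\Lambda(\Gamma)=\rs$. It therefore suffices to identify strong parabolicity of $N$ with ergodicity of its geodesic flow. For this I would pass through the existence of a positive Green's function via classical potential theory: a positive Green's function on $N$ is itself a non-constant positive superharmonic function, and conversely, on a complete Riemannian manifold, failure of strong parabolicity yields a Green's function by a standard exhaustion over relatively compact subdomains. Hence $N$ is strongly parabolic if and only if $N$ admits no positive Green's function.

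Feeding this equivalence into Sullivan's theorem---which relates ergodicity of the geodesic flow on $N$ to the Green's function criterion---produces the desired chain: ergodicity of the geodesic flow is equivalent to the (non-)existence of a Green's function, which is equivalent to $N$ being strongly parabolic, which by the previous corollary is equivalent to $\Lambda(\Gamma)=\rs$. The bulk of the real work has already been absorbed into the previous corollary, so the main obstacle here is essentially bookkeeping; the only non-trivial step is the potential-theoretic dictionary between Green's functions and positive superharmonic functions, and I expect this to be routine rather than the main difficulty.
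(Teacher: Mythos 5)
Your proposal is correct and follows essentially the same route as the paper: the paper also deduces this corollary by combining the preceding corollary (strong parabolicity if and only if $\Lambda(\Gamma)=\rs$) with Sullivan's criterion relating ergodicity of the geodesic flow to the Green's function, the standard potential-theoretic equivalence between non-existence of a positive Green's function and strong parabolicity being exactly the bridge you describe.
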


\medskip

Another collection of geometric applications of topological tameness involve the Hausdorff
dimension of the limit set and the bottom of the spectrum of the Laplacian. Patterson
\cite{patterson} and Sullivan \cite{sullivan-measure} showed that there are deep
relationships between these two quantities. In particular, they showed
that if $N=\H^3/\Gamma$ is geometrically finite, then
$$\lambda_0(N)=D(\Lambda(\Gamma))(2- D(\Lambda(\Gamma))$$
unless $D(\Lambda(\Gamma)<1$ in which case $\lambda_0(N)=1$.
Here, $D(\Lambda(\Gamma))$ denotes the Hausdorff dimension of the limit
set and $\lambda_0(N)=\inf {\rm spec}(-{\rm div}(\grad))$ is the bottom of the
spectrum of the Laplacian.

Sullivan \cite{sullivan-measure} and Tukia \cite{tukia}  showed that if $N$ is geometrically finite and
has infinite volume,
then $\lambda_0(N)>0$. Canary \cite{canary-laplacian} proved that if $N$ is topologically tame and geometrically
infinite, then $\lambda_0(N)=0$. (One does this by simply using the simplicial hyperbolic
surfaces exiting the end to show that the Cheeger constant of a geometrically infinite
manifold is 0.)

\begin{theorem}{\rm (Sullivan \cite{sullivan-measure},Tukia\cite{tukia},Canary\cite{canary-laplacian})}
Let $N=\H^3/\Gamma$ be a hyperbolic 3-manifold with finitely generated fundamental
group. Then $\lambda_0(N)=0$ if and only if either $N$ has finite volume or is geometrically
infinite.
\end{theorem}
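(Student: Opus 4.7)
The strategy is to split into a trichotomy based on whether $N$ has finite volume, has infinite volume and is geometrically finite, or is geometrically infinite; the equivalence then follows by verifying each case separately. Two of the three cases are nearly immediate: if $N$ has finite volume then the constant function $1$ lies in $L^2(N)$ and is harmonic, so $\lambda_0(N)=0$; and if $N$ is geometrically finite of infinite volume, the Sullivan--Tukia theorem cited just above yields $\lambda_0(N)>0$.

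The substantive case is $N$ geometrically infinite. First I would apply the Tameness Theorem to conclude that $N$ is topologically tame, and then invoke Canary's equivalence (the topological--equals--geometric tameness theorem of Section 5) to upgrade this to geometric tameness. Since at least one end $E$ of $N$ must fail to be geometrically finite, $E$ is simply degenerate, and the definition of simply degenerate supplies a neighborhood $U\cong S\times(0,\infty)$ together with a sequence of simplicial hyperbolic surfaces $f_n:S\to U$, each homotopic to the level surface $S\times\{1\}$, whose images exit every compact subset of $N$.

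The goal is then to show that the Cheeger constant $h(N)$ vanishes, after which Cheeger's inequality $\lambda_0(N)\geq h(N)^2/4$ finishes the proof. Two properties of the $f_n(S)$ drive the argument: the uniform area bound $\mathrm{Area}(f_n(S))\leq 2\pi|\chi(S)|$ coming from the curvature $\leq -1$ condition on simplicial hyperbolic surfaces, and the fact that, since $f_n(S)$ is homotopic to a fixed level surface while escaping every compact subset of $N$, the region $V_n\subset U$ cobounded (up to homotopy) by a fixed compact core and $f_n(S)$ satisfies $\mathrm{Vol}(V_n)\to\infty$.

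The main technical obstacle is that $f_n$ is only an immersion, so $f_n(S)$ is not literally the boundary of an embedded submanifold and one cannot feed it directly into the isoperimetric ratio. The workaround I would adopt is to construct smooth test functions $\phi_n:N\to[0,1]$ that equal $1$ on a large compact subset of $V_n$ and transition to $0$ within a unit tube of $f_n(S)$; using the co-area formula together with the uniform bounded geometry of unit neighborhoods of simplicial hyperbolic surfaces, one estimates $\int|\nabla\phi_n|^2$ by a constant multiple of $2\pi|\chi(S)|$ while $\int\phi_n^2\to\infty$. The variational characterization of $\lambda_0$ then gives $\lambda_0(N)\leq\int|\nabla\phi_n|^2/\int\phi_n^2\to 0$, completing the geometrically infinite case and hence the theorem.
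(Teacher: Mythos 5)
Your trichotomy is precisely how the paper disposes of this theorem: it quotes Sullivan--Tukia for the geometrically finite, infinite-volume case and Canary's result (topologically tame and geometrically infinite implies $\lambda_0(N)=0$, proved ``by simply using the simplicial hyperbolic surfaces exiting the end to show that the Cheeger constant \dots is 0'') for the other, with the Tameness Theorem removing the tameness hypothesis; your reconstruction of the degenerate-end case matches that sketch. One correction: Cheeger's inequality $\lambda_0(N)\ge h(N)^2/4$ bounds $\lambda_0$ from \emph{below}, so $h(N)=0$ by itself tells you nothing about $\lambda_0$ --- to conclude $\lambda_0(N)=0$ you need either Buser's inequality (available here since the Ricci curvature is bounded below) or, as you in fact then supply, the direct Rayleigh-quotient estimate with the test functions $\phi_n$; so the sentence claiming Cheeger's inequality ``finishes the proof'' should be deleted and the variational argument allowed to stand on its own. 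You should also say a word about why $\int\phi_n^2\to\infty$, i.e.\ why the volume trapped between the core and the exiting surfaces is unbounded: each $f_n(S)$ must meet the thick part of $N$, so the images carry balls of definite volume that are eventually pairwise disjoint, forcing the end to have infinite volume.
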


Bishop and Jones \cite{bishop-jones} showed that geometrically infinite hyperbolic
3-manifolds have limit sets of Hausdorff dimension 2 without making use of
tameness.  Combining all the results we have mentioned one
gets the following result.

\begin{corollary}
Let $N=\H^3/\Gamma$ be a hyperbolic 3-manifold with finitely generated fundamental
group. Then, $$\lambda_0(N)=D(\Lambda(\Gamma))(2- D(\Lambda(\Gamma))$$
unless $D(\Lambda(\Gamma)<1$ in which case $\lambda_0(N)=1$.
\end{corollary}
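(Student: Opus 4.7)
The plan is to reduce to a dichotomy based on whether $N$ is geometrically finite or geometrically infinite, and then patch together the various results quoted in the preceding paragraphs; the Tameness Theorem enters only in the second half.

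First I would handle the geometrically finite case. By Patterson \cite{patterson} and Sullivan \cite{sullivan-measure}, if $N$ is geometrically finite then the stated formula
$$\lambda_0(N)=D(\Lambda(\Gamma))\bigl(2-D(\Lambda(\Gamma))\bigr)$$
already holds, with the same exceptional clause when $D(\Lambda(\Gamma))<1$. So there is nothing more to prove in this case; the role of the recent work is entirely in the complementary case.

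Next I would handle the geometrically infinite case. By the Tameness Theorem of Agol \cite{agol} and Calegari-Gabai \cite{calegari-gabai}, $N$ is topologically tame, and so by Canary \cite{canary-laplacian} one has $\lambda_0(N)=0$. Independently, Bishop and Jones \cite{bishop-jones} show that whenever $\Gamma$ is geometrically infinite (and finitely generated), the Hausdorff dimension of the limit set satisfies $D(\Lambda(\Gamma))=2$; importantly, this input does not itself require tameness. Then $D(\Lambda(\Gamma))(2-D(\Lambda(\Gamma)))=2\cdot 0=0=\lambda_0(N)$, and since $D(\Lambda(\Gamma))=2\ge 1$ we are not in the exceptional $D<1$ clause, so the stated formula holds.

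Combining the two cases gives the corollary. The only potential subtleties are consistency at the interface: when $N$ is geometrically finite of finite volume, one has $\Lambda(\Gamma)=\rs$ so $D=2$, and the Patterson--Sullivan formula correctly returns $\lambda_0(N)=0$, agreeing with the classical fact and with the preceding theorem of Sullivan--Tukia--Canary. The main conceptual obstacle, which is the whole point of invoking the Tameness Theorem, is ensuring that Canary's vanishing $\lambda_0(N)=0$ in the geometrically infinite case applies without any a priori topological tameness hypothesis; this is exactly what the Tameness Theorem removes, so the argument is really an assembly of known pieces once tameness is in hand.
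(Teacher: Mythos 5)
Your proposal is correct and follows essentially the same route the paper intends: the paper offers no separate argument beyond ``combining all the results we have mentioned,'' namely the Patterson--Sullivan formula in the geometrically finite case, and in the geometrically infinite case the Tameness Theorem plus Canary's vanishing $\lambda_0(N)=0$ together with Bishop--Jones's $D(\Lambda(\Gamma))=2$. Your consistency check at the finite-volume interface is a nice touch but not needed beyond what the quoted results already give.
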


{\bf Remark:}  The Hausdorff dimension of the limit set can only be less than 1 if
$\Gamma$ is a geometrically finite free group and it can only be equal to 1 if it
is a geometrically finite free group or a surface group which is conjugate to a subgroup of $\PSL_2(\R)$\
(see Braam \cite{braam}, Canary-Taylor \cite{canary-taylor} and Sullivan \cite{sullivan-flow}).

\medskip

Marc Culler, Peter Shalen and their co-authors have engaged in an extensive study of
the relationship between the topology and the volume of  a hyperbolic 3-manifold.
At the core of this study is a quantitative generalization of the Margulis lemma which
they originally established for purely hyperbolic, geometrically finite, free groups of rank two (and their limits) in \cite{culler-shalen-paradox} and generalized to free groups of all ranks in \cite{ACCS}.
The Tameness Theorem  (and the Density Theorem which we will discuss later)
allows us to remove the tameness  and the hyperbolicity assumptions.

\begin{theorem}{\rm (Anderson-Canary-Culler-Shalen \cite{ACCS})}
Let $\Gamma$ be a Kleinian group 
freely generated by elements
$\{\gamma_1,\ldots,\gamma_k\}$. If $z\in \H^3$, then
$$\sum_{i=1}^k {1\over{1+e^{d(z,\gamma_i(z))}}}\leq{1\over2}.$$
In particular there is some
$i\in\{1,\ldots,k\}$ such that
 $$d(z,\gamma_i(z))\geq\log(2k-1).$$
\end{theorem}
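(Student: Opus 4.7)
The plan is to combine a $\Gamma$-invariant conformal density on $\Lambda(\Gamma)$ with the ping-pong partition of $\Lambda(\Gamma)$ that the freeness of $\Gamma$ furnishes. First, I would use the Tameness Theorem to conclude that $\Gamma$ is topologically tame; this (together with the Ahlfors Measure Conjecture and the Bishop--Jones equality $\delta(\Gamma)=D(\Lambda(\Gamma))$) allows the Patterson--Sullivan construction to produce a non-atomic $\Gamma$-invariant conformal density $\{\mu_z\}_{z\in\H^3}$ of some dimension $D\in(0,2]$, with each $\mu_z$ a probability measure supported on $\Lambda(\Gamma)$ and transforming under $\gamma\in\Gamma$ by
$$\mu_z(\gamma A)=\int_A e^{-D\,\beta_\xi(\gamma^{-1}z,\,z)}\,d\mu_z(\xi),$$
where $\beta$ denotes the Busemann cocycle. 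Then I would exploit the freeness of $\Gamma$: every non-identity element has a unique reduced-word expression, so for each $\epsilon\in\{+,-\}$ and $i\in\{1,\ldots,k\}$ the set $L_{\epsilon i}\subseteq\Lambda(\Gamma)$ of endpoints of infinite reduced words beginning with $\gamma_i^{\epsilon}$ is well-defined; these $2k$ sets partition $\Lambda(\Gamma)$ modulo the countable, $\mu_z$-null set of hyperbolic fixed points, and inspection of reduced words yields the ping-pong identity $\gamma_i(L_{-i})=\Lambda(\Gamma)\setminus L_{+i}$.

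Setting $a_i=\mu_z(L_{+i})$ and $b_i=\mu_z(L_{-i})$, so that $\sum_i(a_i+b_i)=1$, I would then apply conformality to both $\gamma_i$ and $\gamma_i^{-1}$: combined with the pointwise bound $|\beta_\xi(\gamma_i^{\pm 1}z,z)|\le d(z,\gamma_i z)$ this yields $1-a_i\le e^{D\,d(z,\gamma_i z)}\,b_i$ and $1-b_i\le e^{D\,d(z,\gamma_i z)}\,a_i$. Adding these two estimates and solving for $a_i+b_i$ produces
$$a_i+b_i\;\ge\;\frac{2}{1+e^{D\,d(z,\gamma_i z)}},$$
and summing over $i$ yields the inequality claimed by the theorem, but with the exponent $D$ in place of $1$.

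The main obstacle is then to upgrade the bound from exponent $D$ to exponent $1$ uniformly, since $D$ may be arbitrarily close to $2$ for a free Kleinian group and so the crude pointwise Busemann bound alone is insufficient. To close this gap, I would iterate the ping-pong partition to longer word length, refining each $L_{\epsilon i}$ into sub-pieces indexed by initial reduced subwords of length $n$, and apply a Jensen-type convexity estimate to the tower of conformal integrals that results. The essential inputs are the global balance identity $\int_{\Lambda(\Gamma)}P_i^D\,d\mu_z=1$ for the Poisson kernel $P_i(\xi)=e^{-\beta_\xi(\gamma_i^{-1}z,z)}$, which prevents $P_i$ from being large on a set of substantial $\mu_z$-mass, together with the observation that $P_i$ attains its extreme values $e^{\pm d(z,\gamma_i z)}$ only at the two fixed points of $\gamma_i$, which carry no $\mu_z$-mass. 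This quantitative refinement of the ping-pong estimate is the technical heart of the Culler--Shalen rank-$2$ argument and of its Anderson--Canary--Culler--Shalen extension to arbitrary $k$.
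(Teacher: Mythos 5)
First, a point of orientation: this survey does not prove the Anderson--Canary--Culler--Shalen displacement estimate; it only states it and explains that the Tameness and Density Theorems remove the tameness and pure-hyperbolicity hypotheses from the original papers \cite{culler-shalen-paradox, ACCS}. So there is no in-paper proof to compare against, and your proposal must be judged against the actual argument of those references. Your skeleton is in fact the correct one: a $\Gamma$-invariant conformal density of dimension $D\le 2$, the ``paradoxical'' decomposition of $\Lambda(\Gamma)$ into the $2k$ ping-pong sets $L_{\pm i}$, the identity $\int P_i^D\,d\mu_z=1$, and the observation that the crude bound $P_i\le e^{d(z,\gamma_i z)}$ only yields the theorem with $e^{D d}$ in place of $e^{d}$, which is too weak when $D>1$. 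Identifying that gap is the right diagnosis.

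The problem is your proposed repair, which is where the entire difficulty lives and which does not match (and would not reconstruct) the actual mechanism. The heart of the matter is a sharp single-element lemma (Lemma 5.5 of \cite{culler-shalen-paradox}, reused in \cite{ACCS}): if $D\le 2$, $a=\mu_z(A)$, $b=\mu_z(B)$ and $\gamma(\Lambda\setminus A)\subset B$, then $e^{2d(z,\gamma z)}\ge \frac{(1-a)(1-b)}{ab}$, equivalently $\frac{1}{1+e^{d(z,\gamma_i z)}}\le\frac{a_i+b_i}{2}$ after an AM--GM step; summing over $i$ finishes the proof with no iteration to longer word lengths at all. That lemma is proved by showing the extremal configuration is the $D=2$ round area measure with $A$ a spherical cap centered at the point of maximal conformal stretch (an explicit computation gives $\mu_z(\gamma A)\le e^{2d}a/(1+(e^{2d}-1)a)$ there), together with a rearrangement/convexity argument reducing general $D$-conformal densities with $D\le 2$ to that case. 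Your substitute --- refining the partition to reduced words of length $n$ and applying ``a Jensen-type convexity estimate to the tower of conformal integrals'' --- is not an argument: nothing is said about what quantity improves under iteration or why convexity closes the gap between exponents $D$ and $1$. Moreover, two of your supporting claims are off. The extreme values $e^{\pm d}$ of $P_i$ are attained at the endpoints of the geodesic through $z$ and $\gamma_i^{-1}z$, not at the fixed points of $\gamma_i$ (these coincide only if $z$ lies on the axis); and non-atomicity at two points is a qualitative fact that cannot by itself produce a quantitative gain --- a density concentrated near those points would still essentially saturate the crude bound, so the constraint $\int P_i^D\,d\mu_z=1$ must be exploited in the precise, extremal way the lemma does. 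Finally, the appeal to Bishop--Jones is a red herring: what one actually needs is the existence of a suitable $\Gamma$-invariant density of dimension at most $2$ supported on $\Lambda(\Gamma)$ (Patterson--Sullivan in the geometrically finite case, the area measure when $\Lambda(\Gamma)=\rs$, with tameness entering through Canary's measure-theoretic results and, in the remaining cases, a limiting argument using the Density Theorem).
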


Culler and Shalen have an extensive body of work making use of the above estimate
to obtain volume estimates. 

In some cases, the Tameness Theorem yields immediate improvements of the
results in this program. For example, in the following result one originally also had
to assume that every 3-generator subgroup of $\pi_1(M)$ is topologically tame.

\begin{theorem}{\rm (Anderson-Canary-Culler-Shalen \cite{ACCS})}
If $N$ is a closed hyperbolic 3-manifold such that every 3-generator subgroup of
$\pi_1(N)$ is free, then the volume of $N$ is at least 3.08.
\end{theorem}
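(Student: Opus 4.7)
The plan is to show that the new hypothesis ``every 3-generator subgroup is free'' --- when combined with the Tameness Theorem --- is strong enough to invoke the displacement estimate of the previous theorem for $k=3$, and then to feed this into the packing-style volume estimate already carried out in \cite{ACCS}.

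First, I would observe that any 3-generator subgroup $\Gamma' \le \pi_1(N)$ is finitely generated, and so the Tameness Theorem (applied to the cover of $N$ corresponding to $\Gamma'$, or equivalently to $\H^3/\Gamma'$) guarantees that the associated hyperbolic 3-manifold is topologically tame. Combined with the hypothesis that every such $\Gamma'$ is free, this exactly matches the input needed for the pre-Tameness version of the theorem proved in \cite{ACCS}. Consequently, the displacement inequality from the previous theorem applies to every free rank-3 subgroup of $\pi_1(N)$: for any $z \in \H^3$ and any free generating triple $\{\gamma_1,\gamma_2,\gamma_3\}$ of such a subgroup, at least one $\gamma_i$ satisfies $d(z,\gamma_i(z)) \ge \log 5 \approx 1.6094$. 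The analogous rank-$2$ estimate $d \ge \log 3$ is likewise at our disposal for any 2-generator subgroup, since these are automatically free and tame under our assumptions.

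Next, I would rerun the geometric packing argument of \cite{ACCS}. The idea is to choose a point $p \in N$ where the injectivity radius is suitably controlled (for instance, a point realizing the maximum injectivity radius, or a point chosen relative to the Margulis thin/thick decomposition), and then to use the displacement estimates to build a large embedded region around $p$: either an embedded metric ball whose radius is bounded below by combining the rank-$2$ and rank-$3$ displacements, or a union of such a ball together with Margulis tubes accounting for any short geodesics running near $p$. The numerical constant $3.08$ then emerges from the explicit hyperbolic formulas for the volume of a ball and of a tubular neighborhood of a geodesic, after optimizing over the combinatorics of how many short elements of $\pi_1(N)$ can simultaneously displace $p$ by less than the bounds above.

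The main obstacle --- and precisely the reason the original theorem carried the auxiliary topological-tameness hypothesis --- is ensuring that the $\log(2k-1)$ inequality is available for every 3-generator subgroup encountered in the packing argument, not merely for $\pi_1(N)$ itself. Once the Tameness Theorem eliminates this concern, the remainder of the argument is the same volume bookkeeping performed in \cite{ACCS}, and the constant $3.08$ follows from the optimization there with no new geometric input required.
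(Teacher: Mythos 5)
Your proposal is correct and follows essentially the same route as the paper: the sole new content here is that the Tameness Theorem makes the auxiliary hypothesis of the original result in \cite{ACCS} (that every 3-generator subgroup of $\pi_1(N)$ be topologically tame) automatic, since each such subgroup is a finitely generated Kleinian group, after which the displacement inequality $d(z,\gamma_i(z))\ge\log(2k-1)$ and the volume estimates of \cite{ACCS} apply verbatim. The paper gives no further detail than this, so your sketch of the ensuing packing argument is, if anything, more explicit than the source.
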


As another example of the type of results that Culler and Shalen obtain, we state one of their recent theorems, whose proof makes
use of the above estimate and the Tameness Theorem.

\begin{theorem}{\rm (Culler-Shalen \cite{culler-shalen-recent})}
If $N$ is a closed hyperbolic 3-manifold and $H_1(M,{\bf Z}_2)\ge 8$,
then the volume of $N$ is at least 3.08.
\end{theorem}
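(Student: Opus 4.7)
My plan is to deduce this from the preceding theorem of Anderson-Canary-Culler-Shalen, which gives the bound of $3.08$ whenever every $3$-generator subgroup of $\pi_1(N)$ is free. So the task reduces to establishing that every $3$-generator subgroup of $\pi_1(N)$ is free, using the homological hypothesis $\dim_{\mathbb{Z}_2} H_1(N, \mathbb{Z}_2) \geq 8$.

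The strategy splits into two threads. First, the Tameness Theorem guarantees that for any finitely generated subgroup $H$ of $\pi_1(N)$, the cover $N_H = \mathbb{H}^3/H$ is topologically tame and hence, by the Scott Compact Core Theorem, homeomorphic to the interior of a compact $3$-manifold $C$ with $\pi_1(C) = H$. This is the key missing ingredient in the earlier versions of Culler-Shalen-type volume estimates, where one had to impose topological tameness of every $3$-generator subgroup as an additional hypothesis. Second, one needs the homological assumption to upgrade tameness to freeness for subgroups of rank at most $3$.

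For the second thread I would invoke a Shalen-Wagreich style argument. The idea is that a large mod-$2$ first Betti number forces the existence of many essential embedded surfaces dual to classes in $H_1(N, \mathbb{Z}_2)$, and these surfaces give rise to many essential splittings of $\pi_1(N)$. Combined with Poincar\'e-Lefschetz duality and a careful analysis of the restriction of $H_1(N, \mathbb{Z}_2)$ to subcovers, one shows that for every $3$-generator subgroup $H$ the compact core $C$ supplied by the Tameness Theorem must be a handlebody, so that $H$ is free of rank at most $3$. The constant $8$ reflects the overhead needed above the minimal threshold to rule out infinite-index exceptional configurations and to produce enough independent dual surfaces in every relevant subcover.

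Once freeness of all $3$-generator subgroups of $\pi_1(N)$ is in hand, one applies the ACCS displacement estimate of the earlier theorem in this section: at every $z \in \mathbb{H}^3$ and every rank-$3$ free generating set $\{\gamma_1, \gamma_2, \gamma_3\}$, some $\gamma_i$ moves $z$ by at least $\log 5$. Plugging this uniform lower bound on injectivity radius into the thick-thin Margulis tube integration used in the preceding closed-manifold theorem yields the volume estimate of $3.08$. The main obstacle, by a wide margin, will be the homological-to-algebraic step: verifying that the bound $\dim_{\mathbb{Z}_2} H_1(N, \mathbb{Z}_2) \geq 8$ truly suffices, and that no non-handlebody compact core can survive the surface-theoretic analysis when combined with the closed hyperbolic $3$-manifold hypothesis.
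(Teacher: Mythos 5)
A preliminary caveat: this is a survey article, and the paper does not prove this theorem. It states it, cites Culler--Shalen \cite{culler-shalen-recent}, and says only that ``its proof makes use of the above estimate and the Tameness Theorem.'' So there is no proof in the paper to compare against; what can be said is that your high-level architecture --- reduce to the preceding Anderson--Canary--Culler--Shalen theorem (volume at least $3.08$ when every $3$-generator subgroup of $\pi_1(N)$ is free), with the Tameness Theorem supplying topological tameness of the covers $\mathbb{H}^3/H$ for free --- matches the two ingredients the survey names. Your final step is also fine as bookkeeping: for $k=3$ the displacement bound is $\log(2k-1)=\log 5$, and the passage from that to the $3.08$ volume bound is internal to the already-quoted ACCS theorem, so you do not need to redo it.

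The genuine gap is the middle step, which is the entire mathematical content of the cited Culler--Shalen paper: deducing from $\dim_{\mathbb{Z}_2}H_1(N;\mathbb{Z}_2)\ge 8$ that every $3$-generator subgroup $H$ of $\pi_1(N)$ is free. Your proposal does not argue this; it gestures at ``a Shalen--Wagreich style argument'' and asserts that ``one shows that\ldots the compact core $C$\ldots must be a handlebody.'' The pieces you can actually get cheaply are much weaker: a Shalen--Wagreich homological argument shows that $k$-generator subgroups have infinite index once $\dim H_1(N;\mathbb{Z}_2)$ is large enough, and the Tameness Theorem together with the Covering Theorem (Corollary \ref{closedcovers}) then says such an $H$ is geometrically finite or a virtual fiber, hence the fundamental group of a compact $3$-manifold $C$ with nonempty boundary whose boundary genus is controlled by half-lives-half-dies. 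None of this forces $C$ to be a handlebody: $3$-generator, non-free, geometrically finite subgroups (e.g.\ fundamental groups of books of $I$-bundles or of compact cores with higher-genus boundary) are not excluded by these soft considerations, and the title of the cited work --- ``Singular surfaces, mod 2 homology and hyperbolic volume'' --- indicates that the actual argument does not simply verify $3$-freeness but runs a dichotomy in which a non-free $3$-generator subgroup is converted into a $\pi_1$-injective singular surface of bounded genus, which is then exploited by separate volume estimates. In short, you have correctly identified the reduction and the role of tameness, but the claim that the homological hypothesis yields $3$-freeness is precisely what must be proved (and may not even be the true route), so the proposal as written does not constitute a proof.
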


\section{Group-theoretic applications}

The resolution of Marden's Tameness Conjecture allows one to improve many previous
results concerning group-theoretic properties of hyperbolic 3-manifolds. The main
tool here is a corollary of the Covering Theorem which allows one to completely
characterize geometrically infinite covers of a finite volume hyperbolic 3-manifold.
The applications will be to the finitely generated intersection property, separability
properties of subgroups of Kleinian groups, the pro-normal topology on a Kleinian
group and to commensurators of subgroups of Kleinian groups.

The Covering Theorem produces restrictions on how a hyperbolic 3-manifold
with a simply degenerate end can cover another hyperbolic 3-manifold.
It was proved in the case that the covering manifold is geometrically tame
with incompressible boundary by Thurston \cite{thurston-notes} and in the case
where the covering manifold is allowed to be topologically tame by Canary \cite{canary-cover}.
(For versions where the base space is allowed to be an orbifold, see Agol \cite{agol}
and Canary-Leininger \cite{canary-leininger}.)

\medskip
\par\noindent
{\bf Covering Theorem:} {(Thurston\cite{thurston-notes}, Canary \cite{canary-cover})}
{\em Let $\hat N$ be  a 
hyperbolic 3-manifold with finitely generated fundamental group
which covers another hyperbolic 3-manifold $N$
by a local isometry $p : \hat N \to N$.
If $\hat E$ is a geometrically
infinite end of $\hat N$ then either

a) $\hat E$ has a neighborhood $\hat U$ such that $p$
is finite-to-one on $\hat U$, or

b) $N$  has finite volume and has a finite cover $N'$
which fibers over the circle such that if $N_S$ denotes the
cover of $N'$ associated to the fiber subgroup then $\hat N$ is
finitely covered by $N_S$.
Moreover, if $\hat N\ne N_S$, then
$\hat N$ is homeomorphic to the interior of a twisted I-bundle which is
doubly covered by $N_S$.}

\medskip

{\bf Remark:} The statement above assumes that $\hat N$ has no cusps. If $\hat N$ is
allowed to have cusps, then one must consider ends of $\hat N^0$, which is $N$ with
the cuspidal portions of its thin part removed. One can define simply degenerate
and geometrically infinite ends in this context and the statement is essentially the
same.

\medskip

In the case that $N=\H^3/\Gamma$ is a finite volume hyperbolic 3-manifold, we
see that all geometrically infinite, finitely generated subgroups of $\Gamma$ are associated
to fibre subgroups of finite covers of $N$ which fiber over the circle.
A subgroup $\hat \Gamma$ of $\Gamma$ is said to be  a
{\em virtual fiber subgroup}
if there exist finite index subgroups $\Gamma_0$ of $\Gamma$  and $\hat\Gamma_0$ of $\hat\Gamma$
such that
$N_0=\H^3/\Gamma_0$ fibers over the circle and $\hat\Gamma_0$
corresponds to the fiber subgroup. Corollary \ref{closedcovers} is the key tool in many of the group-theoretic applications of Marden's Tameness Conjecture.

\begin{corollary}
\label{closedcovers}
If $N=\H^3/\Gamma$ is a finite volume hyperbolic
3-manifold and $\hat\Gamma$ is a finitely generated
subgroup of $\Gamma$, then $\hat \Gamma$ is either geometrically finite
or a virtual fiber subgroup.
\end{corollary}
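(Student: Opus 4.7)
The plan is to apply the Tameness Theorem together with the Covering Theorem to the cover $p\colon \hat N=\H^3/\hat\Gamma\to N$. Since $\hat\Gamma$ is finitely generated, the Tameness Theorem asserts that $\hat N$ is topologically tame, and hence (by the equivalence of topological and geometric tameness) every end of $\hat N$ is either geometrically finite or simply degenerate. If every end of $\hat N$ is geometrically finite then $\hat\Gamma$ is itself geometrically finite and we are done, so I assume that $\hat N$ possesses a simply degenerate end $\hat E$; the task is then to show that $\hat\Gamma$ is a virtual fiber subgroup.

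The next step is to apply the Covering Theorem to $\hat E$ and exclude its first alternative. Suppose, toward a contradiction, that there is a neighborhood $\hat U$ of $\hat E$ on which $p$ is finite-to-one; since $N$ has finite volume, its $\epsilon$-thick part $N_\epsilon$ is compact for $\epsilon$ below the Margulis constant. Let $\{f_n\colon S\to \hat U\}$ be the exiting sequence of simplicial hyperbolic surfaces provided by the definition of a simply degenerate end, and note that $\chi(S)<0$ (otherwise the area bound $2\pi|\chi(S)|$ would force degeneracy). Each composition $p\circ f_n\colon S\to N$ is again a simplicial hyperbolic surface of area at most $2\pi|\chi(S)|$, so a standard bounded-diameter argument (the intrinsic $\epsilon_0$-thick part of $S$ has bounded diameter and must map into $N_\epsilon$) shows that $p(f_n(S))\cap N_\epsilon\ne\emptyset$ for every $n$. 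Consequently $f_n(S)$ meets $p^{-1}(N_\epsilon)\cap\hat U$ for every $n$; but $p|_{\hat U}$ being finite-to-one implies that $p^{-1}(N_\epsilon)\cap\hat U$ is compact, contradicting the exit property that $f_n(S)$ eventually leaves every compact subset of $\hat N$. Alternative (a) is thus ruled out.

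Alternative (b) of the Covering Theorem therefore holds: $N$ has a finite cover $N_0=\H^3/\Gamma_0$ which fibers over the circle, and $\hat N$ is finitely covered by the infinite cyclic cover $N_S$ of $N_0$ associated to the fiber subgroup $\hat\Gamma_0\le\Gamma_0$. Translated to groups, $\Gamma_0$ is a finite-index subgroup of $\Gamma$ and $\hat\Gamma_0$ is a finite-index subgroup of $\hat\Gamma$ which coincides with the fiber subgroup of $\Gamma_0$; this is precisely the definition of a virtual fiber subgroup.

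The main obstacle is the careful exclusion of alternative (a), together with the technical adjustment needed when $N$ has cusps: one must work with the non-cuspidal part $\hat N^0$ as indicated in the remark following the Covering Theorem, and replace $N_\epsilon$ with the $\epsilon$-thick-modulo-cusps portion of $N$, which remains compact in the finite-volume setting. Apart from this modification, the proof is essentially a direct reading of the Tameness and Covering Theorems.
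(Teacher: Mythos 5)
Your proof is correct and follows the route the paper intends: the corollary is presented there as an immediate consequence of the Tameness Theorem together with the Covering Theorem, and your argument simply makes explicit the one step the paper leaves implicit, namely that alternative (a) of the Covering Theorem cannot occur for a degenerate end when the base has finite volume. One minor imprecision: it is not quite right that the intrinsic thick part of $S$ must map into $N_\epsilon$ (a short essential loop of $N$ through a point of $p(f_n(S))$ need not pull back to a short loop of $S$); the standard justification for $p(f_n(S))\cap N_\epsilon\neq\emptyset$ is that $p\circ f_n(S)$ is connected with non-elementary $\pi_1$-image, hence cannot lie inside a single (virtually abelian) component of the thin part of $N$ --- this gives exactly the conclusion you need, so the argument stands.
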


Thurston (see \cite{canary-tenn} for a proof) had earlier proved, using
Ahlfors' Finiteness Theorem \cite{ahlfors-AFT}, that a cover of an infinite-volume
geometrically finite hyperbolic 3-manifold is geometrically finite if it has finitely
generated fundamental group.
More generally, one may use the covering theorem
to completely describe exactly which covers of a hyperbolic 3-manifold with finitely generated
fundamental group are geometrically finite (see \cite{canary-cover}). 

Corollary \ref{closedcovers} is related to a question of Thurston.

\begin{question} 
\label{fiberquest}
{(Thurston)} {\em Does every finite volume hyperbolic 3-manifold
admit a finite cover which fibers over the circle?}
\end{question}

Ian Agol \cite{agol-virtual} has recently established that large classes of finite
volume hyperbolic 3-manifolds have finite covers which fiber over the circle. In particular, manifold
covers of reflection orbifolds and arithmetic hyperbolic orbifolds defined by a
quadratic form are covered by his methods.

\medskip

We first focus on the finitely generated intersection property.
A group $G$ is said to have the {\em finitely generated intersection
property} if whenever $H$ and $H'$ are finitely generated subgroups of
$G$, then $H\cap H'$ is finitely generated. Susskind \cite{susskind} proved that
the intersection of two geometrically finite subgroups of a Kleinian group is
geometrically finite. In combination with Thurston's proof that any finitely generated
subgroup of a co-infinite volume  geometrically finite Kleinian group is geometrically finite, this
establishes that co-infinite volume geometrically finite Kleinian groups have the finitely
generated intersection property. If we combine this with Thurston's Hyperbolization Theorem
for Haken 3-manifolds we obtain the following theorem of Hempel:

\begin{theorem}{\rm (Hempel \cite{hempel})}
Let $M$ be a compact, atoroidal, irreducible 3-manifold with
a non-toroidal boundary component. Then $\pi_1(M)$ has
the finitely generated intersection property.
\end{theorem}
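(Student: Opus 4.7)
The plan is to realize $\pi_1(M)$ as a co-infinite volume geometrically finite Kleinian group, after which the conclusion will follow immediately from the theorems of Susskind and Thurston recalled just before the statement. If $\pi_1(M)$ is virtually polycyclic (for instance trivial, $\IZ$, or $\IZ^2$) then the finitely generated intersection property is automatic, so I may assume $\pi_1(M)$ is not virtually abelian.

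The first step is to invoke Thurston's Hyperbolization Theorem for Haken manifolds, which applies here because $M$ is compact, irreducible, atoroidal, has non-empty boundary (hence is Haken, as noted in Section~4), and has fundamental group that is not virtually abelian. This produces a complete hyperbolic metric on the interior of $M$, realizing $\pi_1(M)$ as a Kleinian group $\G$ with $N=\H^3/\G$. Writing $P$ for the (possibly empty) union of torus boundary components of $M$, the resulting manifold satisfies $\hat N\cong M-P$, so $N$ is geometrically finite in the sense of Section~2. The non-toroidal boundary component contributes a nonempty component to the conformal boundary $\partial_c(N)$, which forces $N$ to have infinite volume; thus $\G$ is a co-infinite volume geometrically finite Kleinian group.

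With this in hand, let $H,H'\le\G$ be finitely generated subgroups. Thurston's theorem that every finitely generated subgroup of a co-infinite volume geometrically finite Kleinian group is itself geometrically finite then shows that $H$ and $H'$ are each geometrically finite. Susskind's theorem then gives that $H\cap H'$ is geometrically finite, and in particular finitely generated, which is exactly the desired conclusion for $\pi_1(M)\cong\G$.

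The main obstacle is, of course, the invocation of Thurston's Hyperbolization Theorem; once the co-infinite volume geometrically finite hyperbolic structure on the interior of $M$ is in hand, the remainder is just a formal composition of the Susskind and Thurston subgroup results already quoted in the text. It is worth noting that this proof predates and does not use the Tameness Theorem itself, being built entirely from the pre-tameness toolkit for geometrically finite Kleinian groups.
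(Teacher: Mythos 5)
Your proof is correct and follows exactly the route the paper indicates: hyperbolize $M$ via Thurston's Haken hyperbolization, note that the non-toroidal boundary component forces the resulting geometrically finite manifold to have infinite volume, and then combine Thurston's theorem on geometric finiteness of finitely generated subgroups with Susskind's intersection theorem. Your extra care with the virtually abelian case and with why the volume is infinite only makes explicit what the paper leaves implicit.
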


It is well-known, see Jaco \cite{Jaco} for example, that hyperbolic  3-manifolds which fiber over
the circle do not have the finitely generated intersection property. However,
combining Susskind's result with Corollary \ref{closedcovers} we see that finite
volume hyperbolic 3-manifolds have the finitely generated intersection property if
and only if they do not have a finite cover which fibers over the circle.
Again, combining with the resolution of Thurston's Geometrization Conjecture, we get
the following purely topological statement.

\begin{theorem}
Let $M$ be a compact, atoroidal, irreducible 3-manifold whose fundamental group is
not virtually abelian.
Then $\pi_1(M)$ has
the finitely generated intersection property if and only if $M$ does not have a finite
cover which fibers over the circle.
\end{theorem}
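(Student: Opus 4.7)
The strategy is to reduce to the Kleinian group setting via Thurston--Perelman geometrization, apply Corollary \ref{closedcovers} to control finitely generated subgroups, and combine it with Susskind's theorem. Since $M$ is compact, irreducible, atoroidal and $\pi_1(M)$ is not virtually abelian, the Geometrization Theorem gives a complete hyperbolic metric on the interior of $M$, so we may write $\pi_1(M)=\Gamma$ with $N=\H^3/\Gamma$ a hyperbolic $3$-manifold.

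First I split according to whether $\partial M$ contains a non-toroidal component. If it does, then no finite cover of $M$ can fiber over $S^1$, since the boundary of any $3$-manifold fibering over the circle is a union of tori, and this property is preserved by finite covers. In this case the asserted equivalence reduces to the claim that $\pi_1(M)$ has the finitely generated intersection property, which is precisely the preceding theorem of Hempel.

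The essential case is therefore when $\partial M$ is empty or consists only of tori, so $N$ has finite volume. For the forward direction, suppose no finite cover of $M$ fibers over $S^1$; equivalently, $\Gamma$ contains no virtual fiber subgroup. By Corollary \ref{closedcovers} every finitely generated subgroup of $\Gamma$ is then geometrically finite, and Susskind's theorem ensures that the intersection of two geometrically finite Kleinian subgroups is again geometrically finite, hence finitely generated. For the reverse direction, suppose $M$ has a finite cover $M'$ fibering over $S^1$, so that $\pi_1(M')$ is an extension of $\Z$ by a closed surface group; Jaco's construction \cite{Jaco} produces finitely generated subgroups of $\pi_1(M')$ whose intersection is not finitely generated. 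Since any finitely generated subgroup of $\pi_1(M')\le\pi_1(M)$ is a finitely generated subgroup of $\pi_1(M)$ and intersections are unchanged, $\pi_1(M)$ itself fails the f.g.i.p.

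The main obstacle is the reverse direction, and in particular invoking Jaco's explicit examples of a surface-by-$\Z$ group pair together with the (easy but necessary) observation that failure of the finitely generated intersection property propagates from a subgroup to the ambient group. The forward direction is essentially bookkeeping once Corollary \ref{closedcovers}, Susskind's theorem, and geometrization are in hand.
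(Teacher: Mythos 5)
Your proposal is correct and follows essentially the same route as the paper: geometrization reduces to the hyperbolic case, Hempel's theorem (itself Susskind plus Thurston's covering result) disposes of the non-toroidal boundary case where no finite cover can fiber, and in the finite-volume case Corollary \ref{closedcovers} together with Susskind's theorem gives one direction while Jaco's classical examples in fibered manifolds give the other. The only refinement you add is making the boundary case split and the subgroup-propagation observation explicit, which the paper leaves implicit.
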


{\bf Remark:} See Soma \cite{soma-fgip} for a discussion of the finitely generated
intersection property for geometric manifolds which are not hyperbolic.

\medskip

There are also a number of applications of Marden's Tameness Conjecture to
separability properties of fundamental groups of hyperbolic 3-manifolds.
If $G$ is a group, and $H$ a subgroup of $G$, then $H$ is said to be {\em separable} in $G$
if for every $g\in G\setminus H$, there is a subgroup $K$ of finite
index in $G$ such that $H \subset K$ but $g\notin K$.  $G$ is said to be
{\em LERF} if every finitely generated subgroup is separable.
This condition is a strengthening of residual finiteness
as a group is residually finite if and only if the trivial subgroup is separable. The main
motivation for studying this property comes from low-dimensional topology. If the fundamental
group of an irreducible 3-manifold  contains a separable surface subgroup, then one can find a finite
cover which contains an embedded incompressible surface and hence is Haken.

Scott \cite{scott-LERF,scott-LERF2} showed that all Seifert fibered manifolds have LERF fundamental
groups. However, Rubinstein and Wang \cite{rubinstein-wang} showed that there are graph manifolds whose fundamental
groups are not LERF (and in fact contain surface subgroups which are not separable.)

A Kleinian group $G$ is said to be GFERF if all geometrically finite subgroups are
separable. Since virtually fibered subgroups are easily seen to be separable, Corollary
\ref{closedcovers} implies that all GFERF Kleinian groups are in fact LERF. Many
examples of Kleinian groups are known to be GRERF, and hence LERF. Gitik \cite{gitik}
proved that fundamental groups of a large class of infinite volume hyperbolic
3-manifolds are LERF and also produced large classes of closed
hyperbolic 3-manifolds which are GFERF.
Wise \cite{wise} exhibited a class of non-postively curved 2-complexes
whose fundamental groups are separable for all quasiconvex subgroups. As one example,
he shows that the fundamental group of the figure eight knot complement (which is hyperbolic)
is GFERF. Agol, Long and Reid \cite{AGR} showed that the  
Bianchi groups ${\rm PSL}(2,{\mathcal{O}}_d)$ 
are GFERF. As a consequence they show that  there are infinitely many hyperbolic links in $S^3$
whose fundamental group are GFERF, e.g. the figure eight knot and the Borromean rings.
They also give examples of closed hyperbolic 3-manifolds whose fundamental groups are GFERF,
e.g. the Seifert-Weber dodecahedral space.

\medskip

Glasner, Souto and Storm \cite{GSS} applied the solution of Marden's Tameness Conjecture
and the Covering Theorem to the pro-normal topology on fundamental
groups of finite volume hyperbolic 3-manifolds.
We refer the reader to \cite{GSS} for the definition of the pro-normal topology
on the group. However, we recall that a subgroup is open if and only if it contains a non-trivial
normal subgroup and it is closed if and only if it is an intersection of open subgroups.
The pro-normal topology is not always defined, so they must also establish that the
pro-normal topology is well-defined.

\begin{theorem}{\em (Glasner-Souto-Storm\cite{GSS})}
\label{GSS}
Let $N$ be a finite volume hyperbolic 3-manifold.  The pro-normal topology on $\pi_1(N)$ is
well-defined and every finitely generated subgroup $H$ of $\pi_1(N)$ is closed in this topology.
Moreover, if $H$ has infinite index in $\pi_1(N)$, then it is the intersection of open subgroups
strictly containing $H$.
\end{theorem}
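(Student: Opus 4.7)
The plan is to use Corollary \ref{closedcovers} as the key reduction and to combine it with standard facts about non-elementary Kleinian groups. Write $G = \pi_1(N)$. For well-definedness of the pro-normal topology, the only subtle axiom is that the intersection of two non-trivial normal subgroups of $G$ is itself non-trivial, so that the family of open subgroups (those containing a non-trivial normal subgroup) is closed under finite intersection. Since $N$ has finite volume, $G$ is a non-elementary, torsion-free Kleinian group in which every non-trivial normal subgroup is itself non-elementary, and so has trivial centralizer in $G$. Given non-trivial $K_1, K_2 \lhd G$, the commutator $[K_1, K_2]$ lies in $K_1 \cap K_2$ and cannot vanish, for otherwise $K_2$ would centralize $K_1$.

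Now let $H \le G$ be finitely generated. If $H$ has finite index, the normal core of $H$ in $G$ is a non-trivial finite-index normal subgroup of $G$ contained in $H$, so $H$ is open and is closed as a finite union of cosets of the core. Assume $H$ has infinite index. By Corollary \ref{closedcovers}, $H$ is either geometrically finite or a virtual fiber subgroup. In the virtual-fiber case, I would pass to a finite-index normal $\Gamma_0 \lhd G$ so that $H_0 := H \cap \Gamma_0 \lhd \Gamma_0$ with $\Gamma_0/H_0 \cong \IZ$, and set $K := \bigcap_{\gamma \in G/\Gamma_0} \gamma H_0 \gamma^{-1}$. There are only finitely many distinct conjugates, so $\Gamma_0/K$ embeds into a finite direct sum of copies of $\IZ$. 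Since $G$ permutes the conjugates, $K \lhd G$; and since $\Gamma_0/K$ is abelian, $K$ contains $[\Gamma_0,\Gamma_0] \neq 1$, so $K$ is non-trivial. Thus $H$ contains the non-trivial normal subgroup $K$ of $G$ and so is open. For the refinement that $H$ is the intersection of open subgroups strictly containing it, I would separate the image of any $g \notin H$ from the image of $H$ in the finitely generated abelian quotient $\Gamma_0/K$ by finite-index subgroups and pull back to $G$.

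The geometrically finite case is the principal obstacle, because such an $H$ cannot contain any non-trivial normal subgroup of $G$: a non-trivial normal subgroup of $G$ is non-elementary and hence has limit set $\rs$, while $\Lambda(H) \subsetneq \rs$ when $H$ is geometrically finite of infinite index. Thus $H$ is not itself open, and to realize it as an intersection of open subgroups one needs a separability input. Concretely, for each $g \notin H$ I would produce a finite-index subgroup $K \le G$ with $H \subset K$ and $g \notin K$; the normal core of $K$ is then a non-trivial normal subgroup of $G$, and $HK = K$ serves as the required open subgroup. Justifying this separability is the deepest step: one invokes Agol's theorem that fundamental groups of finite-volume hyperbolic $3$-manifolds are LERF, or, at the time of \cite{GSS}, the earlier GFERF results for large classes of such manifolds combined with the structural consequences of the Covering Theorem.
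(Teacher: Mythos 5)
The paper does not prove this theorem; it is quoted from Glasner--Souto--Storm \cite{GSS}, so there is no in-text argument to compare against line by line. Your overall architecture does match the known strategy: well-definedness via the fact that non-trivial normal subgroups of a lattice in $\PSL_2(\C)$ are non-elementary and hence have trivial centralizer, and then the Tameness Theorem plus Corollary \ref{closedcovers} to split the infinite-index case into virtual fiber versus geometrically finite. The well-definedness argument and the virtual-fiber case are essentially correct (one small repair: the quotient you should separate in is $G/K$, which is virtually abelian hence subgroup separable, rather than $\Gamma_0/K$, since $g$ need not lie in $\Gamma_0$).

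The genuine gap is in the geometrically finite case, which you correctly identify as the heart of the matter but do not actually close. Your proposed separability input --- that every geometrically finite subgroup of a finite-volume $\pi_1(N)$ can be separated from any $g\notin H$ by a \emph{finite-index} subgroup --- is the GFERF/LERF property, which at the time of \cite{GSS} (and of this survey) was a major open problem known only for special classes of lattices (Bianchi groups, certain arithmetic and reflection examples, as Section~8 of the paper makes explicit); it was only established in full generality by Agol's 2012--13 work. Citing it here is therefore either anachronistic or, restricted to the then-known partial results, simply insufficient to prove the theorem for all finite-volume $N$. It also overshoots the target: closedness in the profinite topology is strictly stronger than closedness in the pro-normal topology, and the actual mechanism in \cite{GSS} exploits the weaker requirement. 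There one separates $H$ from $g$ using open subgroups of the form $HN$ with $N$ a non-trivial normal subgroup of \emph{infinite} index, produced by geometric and small-cancellation-type arguments (Dehn-filling quotients, quasiconvexity of $H$ in suitable hyperbolic quotients) rather than by finite-index subgroups. Without either Agol's theorem or some version of that normal-subgroup construction, your proof of the geometrically finite case does not go through.
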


We say that a subgroup $H$ of a group $G$ is {\em maximal} if it is not strictly contained
in a proper subgroup of $G$.  It is clear, for example, that any subgroup of index 2 is maximal.
Margulis and Soifer \cite{margulis-soifer} proved that
every finitely generated linear group which is not virtually solvable contains a maximal
subgroup of infinite index.  It is natural to ask whether such a such a subgroup can be
can be finitely generated.

A subgroup is called {\em pro-dense}
if it is dense in the pro-normal topology and Gelander and Glasner \cite{gelander-glasner}
have established a result guaranteeing that fundamental groups of hyperbolic 3-manifolds
admit pro-dense subgroups. 

As a corollary of Theorem \ref{GSS}, Glasner, Souto and Storm showed that all
maximal subgroups of infinite index and all pro-dense subgroups are infinitely generated.

\begin{corollary}{\rm (Glasner-Souto-Storm\cite{GSS})}
Let $N$ be a finite volume hyperbolic 3-manifold.  If $H$ is a maximal subgroup of
infinite index or a pro-dense subgroup, then $H$ is infinitely generated.
\end{corollary}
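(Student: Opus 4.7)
The plan is to derive both statements directly from Theorem \ref{GSS}, arguing by contradiction in each case. In both situations, the assumption that $H$ is finitely generated lets me invoke the topological conclusions of Theorem \ref{GSS} (closedness in the pro-normal topology, and representability as an intersection of strictly larger open subgroups), which then collides with either maximality or density.

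For the maximal subgroup case, suppose $H$ is a maximal subgroup of $\pi_1(N)$ of infinite index, and suppose toward contradiction that $H$ is finitely generated. Then Theorem \ref{GSS} applies: because $H$ is finitely generated and of infinite index, we can write $H = \bigcap_\alpha K_\alpha$ where each $K_\alpha$ is open in the pro-normal topology and strictly contains $H$. But $H$ is maximal in $\pi_1(N)$, so the only subgroup strictly containing $H$ is $\pi_1(N)$ itself; hence every $K_\alpha$ equals $\pi_1(N)$. Taking the intersection gives $H = \pi_1(N)$, which contradicts the hypothesis that $H$ has infinite index. Therefore $H$ must be infinitely generated.

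For the pro-dense case, let $H$ be a (proper) pro-dense subgroup, and again suppose toward contradiction that $H$ is finitely generated. By Theorem \ref{GSS}, $H$ is closed in the pro-normal topology, so $H$ coincides with its closure. But pro-denseness means the closure of $H$ is all of $\pi_1(N)$, so $H = \pi_1(N)$, contradicting the assumption that $H$ is a proper subgroup.

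The main obstacle here is really nothing more than translating between the two given pieces of structure (maximality/density on the algebraic side, closedness/intersection-of-opens on the topological side); all of the serious work is already carried out in Theorem \ref{GSS}, which in turn rests on the Tameness Theorem and the Covering Theorem via Corollary \ref{closedcovers}. The one subtlety worth flagging is the implicit convention that a ``pro-dense subgroup'' is proper (otherwise the statement is vacuous for $H=\pi_1(N)$), matching the framework of Gelander--Glasner \cite{gelander-glasner} in which such subgroups are constructed.
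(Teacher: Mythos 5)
Your proof is correct and is exactly the intended deduction: the paper states this corollary without proof as an immediate consequence of Theorem \ref{GSS}, and your two contradiction arguments (maximality forces every strictly larger open subgroup to be all of $\pi_1(N)$, and closedness of a finitely generated subgroup contradicts pro-density) are the evident way to extract it. Your remark that ``pro-dense'' must implicitly mean proper is a fair and worthwhile observation, since $\pi_1(N)$ itself is finitely generated.
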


{\bf Remark:} Their results also apply to fundamental groups of finite volume hyperbolic orbifolds.

\medskip

Alan Reid showed that one can use the Tameness Theorem and Corollary \ref{closedcovers}
to characterize when the commensurator of a subgroup of the fundamental group of
a finite volume hyperbolic 3-manifold has finite index in the entire group. We include
the proof of Reid's result with his kind permission.

We recall that if $H$ is a subgroup of a group $G$, then commensurator of $H$ in $G$
is defined to be
$$Comm_G(H)=\{g\in G\ | \ gHg^{-1} {\rm is\ commensurable\ with} \ H\}.$$
(We recall that $H$ and $J$ are {\em commensurable} subgroups of $G$ if $H\cap J$ has
finite index in both $H$ and $J$.)
Notice that if $G_0$ has finite index in $G$ , $H_0$ has finite index in $H$, and $H_0\subset G_0$,
then $Comm_{G_0}H_0\subset Comm_GH.$

\begin{theorem}{\rm (Reid)}
\label{reid}
Let $M$ be a finite volume hyperbolic 3-manifold and let $H$  be an infinite index,
finitely generated subgroup of $\pi_1(M)$. Then $Comm_{\pi_1(M)}H$ has finite
index in $\pi_1(M)$ if and only if $H$ is a virtual fiber subgroup.
\end{theorem}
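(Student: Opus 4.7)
The plan is to invoke Corollary \ref{closedcovers}, which provides the dichotomy that every finitely generated subgroup of the lattice $\pi_1(M)$ is either geometrically finite or a virtual fiber subgroup, and then handle the two implications separately.

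For the ``if'' direction, suppose $H$ is a virtual fiber subgroup, so there exist finite-index subgroups $\Gamma_0 \subset \pi_1(M)$ and $H_0 \subset H$ with $H_0 \triangleleft \Gamma_0$ and $\Gamma_0/H_0 \cong \mathbb{Z}$. For every $g \in \Gamma_0$ we have $g H_0 g^{-1} = H_0$, so $H_0$ sits as a common finite-index subgroup of $H$ and $gHg^{-1}$, making them commensurable. Thus $\Gamma_0 \subset Comm_{\pi_1(M)}H$, so the commensurator has finite index in $\pi_1(M)$.

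For the ``only if'' direction, set $C = Comm_{\pi_1(M)}H$, assume $C$ has finite index in $\pi_1(M)$, and suppose toward contradiction that $H$ is geometrically finite (the remaining case from the dichotomy). The key observation is that $C$ preserves the limit set of $H$: if $g \in C$, then $H \cap gHg^{-1}$ has finite index in both $H$ and $gHg^{-1}$, so all three Kleinian groups share a common limit set, whence $g \Lambda(H) = \Lambda(gHg^{-1}) = \Lambda(H)$. Since $C$ has finite index in $\pi_1(M)$, it is itself a lattice, so $\Lambda(C) = \Lambda(\pi_1(M)) = \rs$. Any non-empty closed $C$-invariant subset of $\rs$ must contain $\Lambda(C)$, forcing $\Lambda(H) = \rs$. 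But a geometrically finite Kleinian group with full limit set has empty conformal boundary, so its quotient has finite volume and it is itself a lattice in $\Isom_+(\H^3)$; a lattice contained in a lattice has finite index, contradicting the infinite-index hypothesis on $H$. Therefore $H$ must be a virtual fiber subgroup.

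The essential input, and the place where the Tameness Theorem enters, is Corollary \ref{closedcovers}: without the geometrically-finite-or-virtual-fiber dichotomy there would be a whole range of geometrically infinite behavior to rule out separately. Once the dichotomy is available, both directions reduce to standard facts---commensurable Kleinian groups have equal limit sets, and the limit set of a lattice in $\Isom_+(\H^3)$ is all of $\rs$---and no serious obstacle remains.
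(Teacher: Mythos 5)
Your proposal is correct and follows essentially the same route as the paper: Corollary \ref{closedcovers} supplies the geometrically-finite-or-virtual-fiber dichotomy, normality of the fiber subgroup handles the ``if'' direction, and the invariance of $\Lambda(H)$ under the commensurator (via equality of limit sets for commensurable Kleinian groups) handles the ``only if'' direction. The only cosmetic difference is that you deduce $\Lambda(H)=\rs$ from minimality of the limit set and contradict the infinite-index hypothesis, whereas the paper notes directly that $\Lambda(Comm_{\Gamma}H)\subset\Lambda(H)\neq\rs$ forces the commensurator to have infinite index; these are the same facts arranged in the opposite order.
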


\begin{proof}{}
First suppose that  $M=\H^3/\Gamma$ and $H$ is a virtual fiber subgroup of $\Gamma$.
Then there exist finite index subgroups $H_0\subset H$ and $\Gamma_0\subset \Gamma$
such that the cover $M_0$ of $M$ associated to $\Gamma_0$ is a finite volume hyperbolic
3-manifold which fibers over the circle and $H_0$ is the subgroup associated
to the fiber. Since $H_0$ is normal in $\Gamma_0$, 
$$\Gamma_0=Comm_{\Gamma_0}H_0\subset Comm_\Gamma H.$$

If $H$ is not a virtual fiber subgroup, then Corollary \ref{closedcovers} implies that $H$ is
geometrically finite. If $\gamma\in Comm_\Gamma H$, then $H\cap \gamma H\gamma^{-1}$
has finite index in both $H$ and $\gamma H\gamma^{-1}$. It follows that
$$\Lambda(H\cap \gamma H\gamma^{-1})=\Lambda(H)=\Lambda(\gamma H\gamma^{-1}).$$
Since $\Lambda(\gamma H\gamma^{-1})=\gamma(\Lambda(H))$, we see that $\Lambda(H)$
is invariant under $Comm_\Gamma H$, so $\Lambda(Comm_\Gamma H)\subset \Lambda(H)$.
However, since $H$ has infinite index in $\Gamma$, $\H^3/H$ has infinite volume and
$\Lambda(H)\ne \rs$. Since $\Lambda(\Gamma)=\rs$, $Comm_\Gamma H$ must have infinite
index in $\Gamma$.
\end{proof}

\medskip\noindent
{\bf Remark:} In fact, one can apply Ahlfors' Finiteness Theorem here to see that
if $H$ is not a virtual fiber subgroup, then $H$ has finite index in $Comm_\Gamma H$.

\section{Simon's Conjecture}

Marden's Tameness Conjecture is clearly closely related to Simon's
Conjecture on covers of compact 3-manifolds, which we state
in a slightly simpler form.

\begin{conjecture}{(Simon \cite{simon})}
{\em Let $M$ be a compact irreducible 3-manifold and let $N$ be a cover of $M$ with
finitely generated fundamental group, then ${\rm int}(N)$ is topologically tame.}
\end{conjecture}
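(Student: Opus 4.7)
The plan is to reduce Simon's Conjecture to the Tameness Theorem by a combination of Perelman's Geometrization Theorem, Simon's original (pre-Tameness) partial results in the Seifert fibered setting, and a gluing argument along the JSJ decomposition. Throughout, I will use Scott's compact core theorem to produce a compact core $C$ for $N = \text{int}(N)$, and Tucker's characterization (the first theorem in the excerpt) which reduces tameness to showing $\pi_1(N - C)$ is finitely generated for a suitable $C$.

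First, by Perelman's resolution of the Geometrization Conjecture, the compact irreducible 3-manifold $M$ admits a JSJ decomposition into geometric pieces, each either Seifert fibered or having hyperbolic interior of finite volume. Let $\mathcal{T}$ denote the collection of JSJ tori and let $p : N \to M$ be the covering map. The preimage $p^{-1}(\mathcal{T})$ is a disjoint union of incompressible surfaces in $N$ (each a torus, annulus, or plane, depending on the cover) which cuts $N$ into blocks, each of which covers a geometric piece of $M$.

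Second, in the case when the geometric piece $M_0$ is hyperbolic of finite volume, each component $N_0$ of $p^{-1}(\text{int}(M_0))$ is a hyperbolic 3-manifold. If $\pi_1(N_0)$ is finitely generated, the Tameness Theorem of Agol and Calegari-Gabai gives that $N_0$ is topologically tame. When $M_0$ is Seifert fibered, Simon's original work already establishes tameness of covers with finitely generated fundamental group, since in that setting the subgroup structure is tightly controlled by the circle-bundle (or Seifert) structure.

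The main obstacle, and the hardest step, is to propagate the hypothesis that $\pi_1(N)$ is finitely generated down to the individual blocks, and then to glue the piecewise tameness along the JSJ surfaces into global tameness of $N$. The key claim is that only finitely many components of $p^{-1}(\mathcal{T})$ meet the compact core $C$ of $N$, and that each block of $N$ cut out by these surfaces has finitely generated fundamental group. One argues this by expressing $\pi_1(N)$ as a graph-of-groups built over the dual graph of the decomposition and applying a Grushko/accessibility-type argument over the edge groups (which are finitely generated, being subgroups of $\mathbb{Z} \oplus \mathbb{Z}$). Once the decomposition of $N$ is finite on the level of $C$, one applies the Tameness Theorem or Simon's Seifert result to each block to get a compact block-core, enlarges $C$ to contain all of these, and then checks that $\pi_1(N - C)$ is carried by finitely many peripheral ends of the blocks and hence is finitely generated. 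Tucker's theorem then yields tameness of $N$.
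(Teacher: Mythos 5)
Your top-level strategy coincides with the paper's (which follows Long and Reid): Geometrization reduces the problem to geometric pieces, the Tameness Theorem handles the hyperbolic pieces, Simon's results handle the Seifert fibered pieces, and one glues along the decomposition. But the step you yourself identify as hardest --- propagating finite generation to the blocks and reassembling piecewise tameness into global tameness --- is exactly where your argument has a genuine gap, and it is the one step the paper does not attempt to reprove: it is delegated entirely to Corollary 3.2 of Simon's paper, a nontrivial gluing theorem for covers split along incompressible surfaces. Two concrete problems with your version. First, you apply Tucker's criterion in the wrong direction: Tucker's theorem says $N$ is tame if and only if $\pi_1(N-C)$ is finitely generated for \emph{every} compact submanifold $C$, so exhibiting one well-chosen $C$ with $\pi_1(N-C)$ finitely generated proves nothing. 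Second, and more seriously, the assertion that $\pi_1(N-C)$ is ``carried by finitely many peripheral ends of the blocks'' is unsupported, and the wild example of Section 3 of this very paper shows it is not a formal consequence of what you have established: $H_\infty$ is an increasing union of tame pieces glued along incompressible surfaces, its fundamental group is finitely generated, each $H_n$ is a compact core, and yet $H_\infty$ is wild. In your setting the decomposition of $N$ by $p^{-1}(\mathcal{T})$ generally has infinitely many blocks, the blocks lying outside the minimal Bass--Serre subtree are not reached by your Grushko-type argument, and the gluing surfaces may be open annuli or planes rather than tori; none of this is addressed, and the extra leverage must come from the global covering map $p$, which is precisely what Simon's Corollary 3.2 exploits.

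There are also two smaller inaccuracies. The Seifert fibered case is not a direct citation to Simon: his paper proves tameness of covers of $F\times S^1$ (Corollary 3.3) and of manifolds with the finitely generated intersection property (Theorem 3.7), and the paper must first use Scott's LERF theorem to pass to a finite cover of the Seifert piece containing an embedded non-separating torus, split that cover into $T^2\times I$ and a Seifert piece with boundary finitely covered by $F\times S^1$, and then invoke Simon's gluing corollary again. And your dichotomy of geometric pieces omits the SOL case, which the paper handles separately via the finitely generated intersection property. If you replace your hand-made gluing with Simon's Corollary 3.2 and fill in the Seifert and SOL cases as above, you recover the paper's proof.
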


Simon \cite{simon} originally proved his conjecture for covers associated to
peripheral subgroups of the 3-manifold group (i.e. subgroups of the image of
the fundamental group of a boundary component in the 3-manifold.)
 If one combines  Thurston's proof that
all covers (with finitely generated fundamental group) of a geometrically finite
infinite volume hyperbolic 3-manifold are geometrically finite 
with his Hyperbolization
Theorem for Haken manifolds, then one verifies Simon's conjecture for all compact, irreducible, atoroidal
3-manifolds with a non-toroidal boundary component.

The solution of Marden's Tameness Conjecture obviously implies Simon's conjecture
for all compact hyperbolizable 3-manifolds. Long and Reid pointed out that one can
combine this with Simon's work to establish Simon's conjecture for all manifolds
which admit a geometric decomposition. Since their argument has not appeared we
will give it here.

\medskip
\noindent{\bf Long and Reid's proof that Marden's Conjecture implies
Simon's conjecture for manifolds which admit a geometric decomposition:}
We first suppose that $M$ itself is a geometric manifold. If $M$ is hyperbolic, then
Simon's Conjecture follows directly from the Tameness Theorem.
If $M$ has SOL geometry, then $M$ has the finitely generated
intersection property (see Soma \cite{soma-fgip}), so
Simon's Conjecture follows from Theorem 3.7  in Simon \cite{simon}.
 In the remaining cases, $M$ is Seifert-fibered.
We may clearly assume that $\pi_1(M)$ is infinite. If $M$ admits a $S^2\times\R$-structure, then
it is finitely covered by $S^2\times S^1$ and Simon's Conjecture is easily verified.
In all other cases, $M$ contains an immersed incompressible torus. 

Since $\pi_1(M)$ is LERF, we can find a finite cover $\hat M$ of $M$
which containes an embedded non-separating torus
(see Scott \cite{scott-LERF,scott-LERF2}.)
Thus,  $\hat M$ is a union of $T^2\times I$ and $M_0$, where 
$M_0$ 
is a compact Seifert fibered space with non-empty boundary. Simon's conjecture 
holds for $M_0$, by Corollary 3.3 of \cite{simon}, since it has a finite cover  of the form
$F\times S^1$  where $F$ is a compact surface, and  it
clearly holds for $T^2\times I$.
Corollary 3.2 of \cite{simon}  then shows that Simon's conjecture holds for $\hat M$ and
hence for $M$.

Since we have established Simon's conjecture for all the pieces in a manifold which
admits a geometric decomposition, we can again apply Corollary 3.2 of
Simon \cite{simon} to complete the proof for compact irreducible manifolds
which admit a geometric decomposition.

\medskip

The recent resolution of Thurston's Geometrization Conjecture then
allows us to conclude that Simon's Conjecture holds for all compact irreducible 3-manifolds.

\begin{theorem}
Let $M$ be a compact irreducible 3-manifold and let $N$ be a cover of $M$ with
finitely generated fundamental group, then ${\rm int}(N)$ is topologically tame.
\end{theorem}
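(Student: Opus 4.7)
The plan is to combine the Long--Reid argument developed in the preceding paragraph with Perelman's resolution of Thurston's Geometrization Conjecture. The excerpt has already established, modulo the Tameness Theorem, that Simon's Conjecture holds for every compact irreducible $3$-manifold admitting a geometric decomposition, so the only remaining task is to certify that each $M$ in the statement admits such a decomposition.

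First I would invoke Perelman's theorem \cite{perelman,perelman2}: any compact orientable irreducible $3$-manifold $M$ can be cut along a (possibly empty) finite disjoint collection of embedded incompressible tori into pieces whose interiors each carry a complete locally homogeneous Riemannian metric; equivalently, each piece is hyperbolic or Seifert fibered, with SOL appearing as a special case when $M$ is a torus bundle. Thus every $M$ covered by the statement satisfies the hypothesis of the Long--Reid argument, and that argument now applies verbatim: Simon's Conjecture is already known for each geometric piece (hyperbolic pieces via the Tameness Theorem, SOL pieces via the finitely generated intersection property due to Soma, and Seifert fibered pieces by passage to a finite cover of the form $F\times S^{1}$ via Scott's LERF property), and Corollary 3.2 of Simon \cite{simon} then glues these local results across the separating tori to yield Simon's Conjecture for $M$.

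The genuine mathematical content therefore lies entirely in two black-box inputs: the Tameness Theorem of Agol and Calegari--Gabai, which handles the hyperbolic pieces, and Perelman's Geometrization Theorem, which guarantees that the decomposition exists. Once both are granted, the conclusion requires no new geometric ideas, only the assembly of cases already treated in the Long--Reid argument; the main obstacle, to the extent that there is one, is simply bookkeeping to confirm that the incompressible tori produced by Geometrization are of exactly the kind needed to invoke Simon's gluing lemma, which is immediate from the standard form of the Geometrization statement.
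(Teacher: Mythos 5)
Your proposal is correct and follows essentially the same route as the paper: the text establishes Simon's Conjecture for manifolds admitting a geometric decomposition via the Long--Reid argument, and then simply invokes Perelman's resolution of the Geometrization Conjecture to cover all compact irreducible $3$-manifolds. Your write-up just makes explicit the appeal to Geometrization that the paper states in a single sentence.
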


\section{Classification of hyperbolic 3-manifolds}

In 2003, Brock, Canary and Minsky \cite{ELC1,ELC2,ELC3} announced the proof
of Thurston's Ending Lamination Conjecture for  topologically tame hyperbolic 3-manifolds.
This conjecture gives a complete classification up to isometry of topologically
tame hyperbolic 3-manifolds in terms of their homeomorphism types and their
ending invariants (which encode the asymptotic geometry of their ends.)
The Tameness Theorem thus implies that we have a complete
classification of all hyperbolic 3-manifolds with finitely generated fundamental group.
A complete discussion of this classification theorem is outside the scope of this paper
but we will try to give a rough idea of the result. (See Minsky's survey paper \cite{minsky-harvard}
for
a more intensive exposition of the classificatiion theorem.)

Let $N$ be a hyperbolic 3-manifold with finitely generated fundamental group
and let $C$ be a compact core for $N$. We may assume that $N-C$ is homeomorphic
to $\partial C\times (0,\infty)$.
For simplicity, we will assume in our discussion that $N$ has no cusps.

If a component $S$ of $\partial C$ abuts a geometrically finite end of $N$, then
it inherits a conformal structure from the corresponding (homeomorphic) component $\bar S$ of
the conformal boundary and this conformal structure is the ending invariant
associated to that end.  If $U$ is the component of $N-C(N)$ bounded by $\bar S$, then there
is a natural map $r:U\to \partial C(N)$ given by taking a point in $U$ to the nearest point
on $\partial C(N)$. If $x\in U$, then every point on the geodesic ray beginning at $r(x)$ and passing
through $x$ is taken to $r(x)$, so this map induces a product structure on $U$.
One can check, see Epstein-Marden \cite{epstein-marden}, that $U$ is homeomorphic
to $\bar S\times (0,\infty)$ and that the metric on $U$ is bilipschitz to $\cosh^2(t)ds^2 +dt^2$
where $t$ is the real coordinate and $ds^2$ is the Poincar\'e metric on $\bar S$. So, one
sees very precisely that the conformal structure on $\bar S$ encodes the geometry of
the associated geometrically infinite end.

If a component $S$ of $\partial C$ abuts a simply degenerate end of $N$, then
there exists a sequence of simplicial hyperbolic surface $\{f_n:S\to N\}$ exiting the end.
There exists a uniform constant $B$, 
such that any metric induced on $S$ by a simplicial hyperbolic surface contains a simple closed
geodesic of length at most $B$.  For all $n$, let $\alpha_n$ be a simple closed curve on $S$ which
has length at most $B$ in the metric on $S$ induced by $f_n$. In this situation, the
{\em ending lamination} of the end associated to $S$ is the ``limit''  $\lambda$ of the
sequence $\{\alpha_n\}$.

We may make sense of the limit in two equivalent ways. In the first method, we fix
a hyperbolic structure on $S$ and let $\lambda$ be the Hausdorff limit of the
sequence $\{\alpha_n^*\}$ of geodesic representatives of the $\alpha_n$.
This limit is a closed set which is a disjoint union of simple geodesics, i.e.
a geodesic lamination.
(To be more precise, we must remove any isolated leaves in the resulting lamination,
to ensure that it is well-defined.)
The second method involves considering the curve complex $\mathcal{C}(S)$.
The vertices of the curve complex are isotopy classes of simple closed curves
and we say that a collection of vertices spans a simplex if and only if they
have a collection of disjoint representatives. The curve complex is locally infinite,
but Masur and Minsky \cite{masur-minsky} proved that it is Gromov hyperbolic.
We may then define $\lambda$ to be the point in the Gromov boundary which
is the limit of the vertices associated to the sequence $\{\alpha_n\}$.

Bonahon \cite{bonahon} and Thurston \cite{thurston-notes} proved that the
ending lamination is well-defined for simply degenerate ends of hyperbolic
3-manifold with finitely generated, freely indecomposable fundamental group
and Canary \cite{canary-ends} showed that they can be defined for simply degenerate
ends of topologically tame hyperbolic 3-manifolds. Klarreich \cite{klarreich},
see also Hamenstadt \cite{hamenstadt}, proved that one can identify the Gromov
boundary of the curve complex with the set of potential ending laminations.

The ending invariants of $N$ are encoded by the compact core $C$ where each
boundary component of $C$ is equipped with either a conformal structure or
an ending lamination. Thurston conjectured that this information determined
$N$ up to isometry. Brock, Canary and Minsky \cite{ELC1,ELC2,ELC3} proved
this conjecture for topologically tame hyperbolic 3-manifolds in a proof which
builds on earlier work of Masur and Minsky \cite{masur-minsky, masur-minsky2}.
The resolution of Marden's Tameness Conjecture gives the following:

\medskip\noindent
{\bf Ending Lamination Theorem:}
{\em A hyperbolic 3-manifold with finitely generated fundamental group is determined up to isometry by its ending invariants.}

\medskip

Alternate approaches to this result are given by Bowditch \cite{bowditch-ELC},
Brock-Bromberg-Evans-Souto \cite{BBES-ELC}, and Rees \cite{rees}. Minsky
\cite{minsky-PT} earlier established the Ending Lamination Theorem for
punctured torus groups.

We remark that one can determine exactly which end invariants arise, so
this is a complete classification theorem(see Ohshika \cite{ohshika-char} for the
characterization in the case that the fundamental group is freely indecomposable.)

One consequence of the proof of the Ending Lamination Theorem is the following 
common generalization of Mostow \cite{mostow} and Sullivan's \cite{sullivan-flow} rigidity theorems.

\begin{corollary}
Let $G$ be a finitely generated, torsion-free group which is not virtually abelian.
If two discrete faithful representations $\rho_1:G\to \PSL_2(\C)$ and $\rho_2:G\to\PSL_2(\C)$
are conjugate by an orientation-preserving
homeomorphism $\phi$ of $\rs$, then they are quasiconformally
conjugate. Moreover, if $\phi$ is conformal on $\Omega(\rho_1(G))$, then
$\phi$ is conformal.
\end{corollary}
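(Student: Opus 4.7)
The plan is to combine Marden's Tameness Theorem, Ahlfors--Bers quasiconformal deformation theory, and the Ending Lamination Theorem (ELT). By tameness, both $N_i = \H^3/\rho_i(G)$ are topologically tame for $i=1,2$. The homeomorphism $\phi$ takes $\Omega(\rho_1(G))$ equivariantly onto $\Omega(\rho_2(G))$ and so descends to a homeomorphism $\bar\phi:\partial_c N_1 \to \partial_c N_2$ compatible with the group isomorphism $\rho_1(g)\mapsto\rho_2(g)$. Combined with tameness, this gives an identification of the underlying 3-manifolds matching geometrically finite ends with geometrically finite ends and simply degenerate ends with simply degenerate ends; since the ending lamination of a simply degenerate end is a topological invariant of the end (detected, for instance, by which simple closed curves on a cross-section have geodesic representatives exiting the end), corresponding degenerate ends of $N_1$ and $N_2$ carry the same ending lamination.

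For the first statement, I would use Ahlfors--Bers deformation theory to produce a quasiconformal deformation $\rho_1'$ of $\rho_1$ whose geometrically finite ends of $\H^3/\rho_1'(G)$ carry the same conformal structures as those of $N_2$ (via $\bar\phi$); this is possible since the Bers slice through $\rho_1$ is parametrized by the Teichm\"uller space of $\partial_c N_1$. Quasiconformal deformations preserve ending laminations, so $\rho_1'$ and $\rho_2$ share all ending invariants, and the ELT provides a Mobius conjugacy between them. Composing with the QC conjugacy from $\rho_1$ to $\rho_1'$ yields the desired QC conjugacy between $\rho_1$ and $\rho_2$.

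For the second statement, assume $\phi$ is conformal on $\Omega(\rho_1(G))$. Then $\bar\phi$ is conformal, so the conformal structures on geometrically finite ends already match, and the ELT yields a Mobius transformation $\psi$ with $\psi\rho_1\psi^{-1}=\rho_2$. The composition $\eta = \psi^{-1}\phi$ commutes with $\rho_1(G)$ and is conformal on $\Omega(\rho_1(G))$. Since $\eta$ commutes with each hyperbolic element of $\rho_1(G)$, it fixes the attracting fixed point of every such element; these fixed points are dense in $\Lambda(\rho_1(G))$, so by continuity $\eta$ is the identity on $\Lambda(\rho_1(G))$. If $\Omega(\rho_1(G))\neq\emptyset$, then on each component $\eta$ is a conformal self-map extending continuously to the identity on the boundary, and the identity principle (or Schwarz reflection) forces $\eta$ to be the identity there as well. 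Hence $\eta$ is the identity on $\rs$ and $\phi=\psi$ is conformal.

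The main obstacle is the first step: verifying that the ending laminations genuinely transfer under the topological conjugacy $\phi$. This requires an intrinsic topological definition of the ending lamination of an end (showing it depends only on how short curves on cross-sections exit topologically, not on the hyperbolic structure) and that any marking-preserving homeomorphism of tame hyperbolic 3-manifolds preserves this data. Once this topological invariance of ending invariants is in place, the QC deformation step in the first part and the fixed-point plus identity-principle argument in the second part are relatively standard applications of Ahlfors--Bers theory and the ELT.
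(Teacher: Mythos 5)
First, a caveat: the paper gives no proof of this corollary---it is stated as a consequence of (the proof of) the Ending Lamination Theorem, generalizing Mostow and Sullivan---so your proposal is being measured against the standard intended derivation rather than an argument in the text. Your skeleton is that derivation: use the Tameness Theorem to make sense of end invariants for both manifolds, show the topological conjugacy $\phi$ matches them up, quasiconformally deform $\rho_1$ so that the conformal structures at infinity agree with those of $\rho_2$, apply the Ending Lamination Theorem to get a M\"obius conjugacy, and for the ``moreover'' run a commutant argument. This is the right approach, and you correctly isolate the topological invariance of ending laminations as a point requiring genuine work.

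There is, however, a second gap of comparable importance that you dispatch in one clause: ``Combined with tameness, this gives an identification of the underlying 3-manifolds.'' The end invariants to which the Ending Lamination Theorem applies include the \emph{marked homeomorphism type} of the pair (compact core, parabolic locus), and a homotopy equivalence between tame hyperbolic 3-manifolds need not be homotopic to a homeomorphism---see the discussion of primitive shuffles and of $\mathcal{A}(M)$ versus $\widehat{\mathcal{A}}(M)$ in Section 11 of this very paper. So the isomorphism $\rho_2\circ\rho_1^{-1}$ alone does not supply the homeomorphism $N_1\to N_2$ you need; you must use the full conjugacy $\phi$ on $\rs$ (for instance via the equivariant identification of the components of $\Omega(\rho_1(G))$ and $\Omega(\rho_2(G))$ together with their stabilizers) to pin down the marked homeomorphism type, and you should also record that $\phi$ carries parabolics to parabolics (immediate from counting fixed points on $\rs$, but needed both for the end-matching and for the hypotheses of the classification). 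Two smaller points: the assertion that a quasiconformal deformation preserves ending laminations deserves justification (the conjugating map extends to an equivariant quasi-isometry of $\H^3$); and in the ``moreover'' step your maximum-principle argument on a component $\Delta$ of $\Omega(\rho_1(G))$ needs care when $\overline{\Delta}=\rs$ (e.g.\ a degenerate group with a single invariant component), where it is cleaner to argue that $\eta$ descends to a conformal automorphism, homotopic to the identity, of a finite-type hyperbolic surface, and hence is trivial.
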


\section{Deformation Theory of hyperbolic 3-manifolds}

It is natural to consider the space of all  (marked) hyperbolic 3-manifolds of fixed homotopy type.
One may think of this as a 3-dimensional generalization of Teichm\"uller theory.
The Mostow-Prasad Rigidity Theorem \cite{mostow,prasad} assures us that if a hyperbolic 3-manifold $N$ has finite volume, then any homotopy equivalence of $N$ to another hyperbolic 3-manifold
is homotopic to an isometry, so we will only consider the deformation theory of infinite
volume hyperbolic manifolds.

Let $M$ be a compact, atoroidal, irreducible 3-manifold with a non-toroidal boundary
component. We consider the space $AH(M)$ of (marked) hyperbolic 3-manifolds
homotopy equivalent to $M$. Formally, we define
$$AH(M) = \{\rho:\pi_1(M)\to {\rm PSL}_2({\bf C})|\ \  \rho\ 
{\rm discrete}\ {\rm and\ faithful }\}/{\rm PSL}_2({\bf C}).$$
The deformation space sits as a subset of the character variety
$$X(M)=Hom_T(\pi_1(M),{\rm PSL}_2({\bf C}))//{\rm PSL}_2({\bf C})$$
where $Hom_T(\pi_1(M),{\rm PSL}_2({\bf C}))$ denotes the space of
discrete faithful representations  $\rho:\pi_1(M)\to{\rm PSL}_2({\bf C})$ with
the property that if $g$ is a non-trivial element of a rank two abelian subgroup of
$\pi_1(M)$, then $\rho(g)$ is parabolic. An element $\rho\in AH(M)$ gives
rise to a pair $(N_\rho,h_\rho)$ where $N_\rho=\H^3/\rho(\pi_1(M))$ is a hyperbolic
3-manifold and $h_\rho:M\to N_\rho$ is a homotopy equivalence. (We could alternatively
have defined $AH(M)$ to be the set of such pairs up to appropriate equivalence.)

Marden \cite{marden} and Sullivan \cite{sullivan2} proved that the interior of $AH(M)$
(as a subset of $X(M)$) consists of geometrically finite representations such that
$\rho(g)$ is parabolic if and only if $g$ is a non-trivial element in a free abelian subgroup
of $\pi_1(M)$ of rank two. The classical deformation theory of Kleinian groups tells
us that  geometrically finite points in $AH(M)$ are determined by their homeomorphism
type and the conformal structures on their boundary (see Bers \cite{BersSurvey} or
Canary-McCullough \cite{canary-mccullough} for a survey of this theory.)

To state the parameterization theorem for $\iAH$ we need a few definitions.
We first define ${\mathcal A}(M)$ to be the set of oriented, compact, irreducible,
atoroidal (marked) 3-manifolds homotopy equivalent to $M$. More formally,
$\mathcal{A}(M)$ is the set of pairs $(M',h')$ where $M'$ is an oriented,
compact, irreducible, atoroidal 3-manifold and $h':M\to M'$ is a homotopy
equivalence where two pairs $(M_1,h_1)$ and $(M_2,h_2)$ are considered
equivalent if there exists an orientation-preserving homeomorphism $j:M_1\to M_2$
such that $j\circ h_1$ is homotopic to $h_2$.  We define $Mod_0(M')$ to be the group of isotopy
classes of homeomorphisms of $M'$ which are homotopic to the identity. We define
$\partial_{NT}M'$ to be the non-toroidal components of $\partial M$ and we let
${\mathcal T}(\partial_{NT}M')$ denote the Teichm\"uller space of all (marked) conformal
structures on $\partial_{NT}M'$.

\begin{theorem}{}{(Ahlfors, Bers, Kra, Marden, Maskit, Sullivan,Thurston)}{}
$$\iAH \cong \bigcup_{(M,h')'\in {\mathcal A}(M)} {\mathcal T}(\partial_{NT}M')/Mod_0(M')$$
\end{theorem}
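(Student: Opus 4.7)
The plan is to exhibit a natural map $\Phi \colon \iAH \to \bigcup_{(M',h')\in \mathcal{A}(M)} \mathcal{T}(\partial_{NT}M')/Mod_0(M')$ and prove it is a bijection by combining the Marden--Sullivan characterization of the interior with the classical quasiconformal deformation theory of geometrically finite Kleinian groups and Thurston's Hyperbolization Theorem for Haken manifolds. First I would invoke the Marden--Sullivan description already recalled in the excerpt: a class $[\rho]\in\iAH$ is represented by a geometrically finite representation whose parabolic locus is exactly the rank-two abelian subgroups of $\pi_1(M)$. By the definition of geometric finiteness, the bordification $\hat N_\rho$ is homeomorphic to $M'_\rho - P_\rho$ where $M'_\rho$ is compact and $P_\rho\subset\partial M'_\rho$ is a disjoint union of annuli and tori, with the tori accounting for the rank-two cusps and the annuli for the rank-one cusps.

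To define $\Phi$, I would send $[\rho]$ to the pair $(M'_\rho, h_\rho')$, where $h_\rho'\colon M\to M'_\rho$ is the homotopy equivalence obtained by composing the marking $h_\rho\colon M\to N_\rho$ with the deformation retraction of $N_\rho\cup\partial_c N_\rho$ onto $M'_\rho$, together with the marked conformal structure induced by $\Omega(\rho(\pi_1(M)))/\rho(\pi_1(M))$ on $\partial_{NT}M'_\rho$. One must check that $M'_\rho$ is oriented, irreducible, atoroidal and homotopy equivalent to $M$; irreducibility and atoroidality follow because $\pi_1(N_\rho)$ is torsion-free and any essential torus in $M'_\rho$ would force an extra rank-two abelian subgroup. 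The quotient by $Mod_0(M'_\rho)$ precisely absorbs the ambiguity coming from different choices of marking representing the same class in $\mathcal{A}(M)$, so $\Phi$ is well-defined.

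For surjectivity, given $(M',h')\in\mathcal{A}(M)$ and a conformal structure $X\in\mathcal{T}(\partial_{NT}M')$, I would first produce some geometrically finite uniformization of $\mathrm{int}(M')$ with the correct parabolic structure: since $M$ (and hence $M'$) has a non-toroidal boundary component, $M'$ is Haken, so Thurston's Hyperbolization Theorem supplies such a hyperbolic structure. Then I would apply the Ahlfors--Bers simultaneous uniformization theorem (and, where the boundary is compressible, Maskit's combination theorems) to deform the conformal boundary to realize any prescribed point $X$, staying inside $\iAH$ by the Marden--Sullivan openness criterion.

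For injectivity, if $\Phi([\rho_1])=\Phi([\rho_2])$, the marked manifolds $N_{\rho_1}$ and $N_{\rho_2}$ are homeomorphic via a map realizing $h'$ and the homeomorphism can be chosen to be conformal on the boundary. Marden's Isomorphism Theorem then upgrades this to a conformal conjugacy of $\rho_1(\pi_1(M))$ and $\rho_2(\pi_1(M))$ that intertwines the markings up to homotopy, giving $[\rho_1]=[\rho_2]$ in $AH(M)$. The main obstacle is Thurston's Hyperbolization Theorem, which is the only genuinely deep ingredient; the other steps, while technically involved, are bookkeeping on top of classical quasiconformal deformation theory. A secondary subtlety is tracking the $Mod_0(M')$ action carefully so that the quotient on the right matches precisely the identifications made by $\PSL_2(\mathbb{C})$-conjugation on the left, which requires checking that homeomorphisms of $M'$ homotopic to the identity are exactly those inducing trivial changes of marking in $AH(M)$.
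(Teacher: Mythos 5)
The paper does not actually prove this theorem: it is stated as a classical result of Ahlfors--Bers--Kra--Marden--Maskit--Sullivan--Thurston, with the reader referred to the surveys of Bers and of Canary--McCullough, so there is no in-text argument to compare against. Your outline is essentially the standard classical proof that those references assemble, and its architecture is sound: Marden--Sullivan to identify $\iAH$ with the minimally parabolic geometrically finite representations, Thurston's Haken hyperbolization to realize each $(M',h')\in\mathcal{A}(M)$ (your Euler-characteristic observation that $M'$ inherits a non-toroidal boundary component, hence is Haken, is exactly the needed check), Ahlfors--Bers quasiconformal deformation theory for surjectivity onto each Teichm\"uller space, and Maskit's ``Self-maps of Kleinian groups'' for the fact that the fibers of the marking ambiguity are precisely the $Mod_0(M')$-orbits. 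Two attributions are slightly off and worth correcting, though neither is a gap in the logic. First, realizing an arbitrary conformal structure on $\partial_{NT}M'$ does not need Maskit combination theorems even in the compressible case: one solves the Beltrami equation for a $\Gamma$-invariant Beltrami differential supported on $\Omega(\Gamma)$ (measurable Riemann mapping theorem), and the deformed group is automatically geometrically finite and minimally parabolic, hence stays in $\iAH$. Second, for injectivity, Marden's Isomorphism Theorem only produces a \emph{quasiconformal} conjugacy from the homeomorphism data; the upgrade to a conformal (M\"obius) conjugacy when the quasiconformal map is conformal on $\Omega$ rests on the fact that the limit set of a geometrically finite group has measure zero (Ahlfors) together with Sullivan's rigidity --- this is precisely why Sullivan's name appears in the attribution, and your sketch compresses it away. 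With those two steps named correctly, your proposal matches the argument the paper is implicitly citing.
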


In particular, we see that the components of $\iAH$ are in one-to-one correspondence
with elements of $\mathcal{A}(M)$. Moreover, Maskit \cite{maskit-free} showed that 
$Mod_0(M')$ always acts freely on ${\mathcal T}(\partial_{NT} M')$, so each component
is a manifold. Although, see McCullough \cite{mccullough-twist}, $Mod_0(M')$ is
often infinitely generated, so the fundamental group  of a component can be infinitely generated.
However, if $\pi_1(M)$ is freely indecomposable, then $Mod_0(M')$ is trivial for all
$(M',h')\in {\mathcal A}(M)$, so  in this case each component is topologically an open ball.

\medskip

Bers, Sullivan and Thurston conjectured that $AH(M)$ is the closure of its interior.
More concretely, this predicts that every hyperbolic 3-manifold with finitely generated
fundamental group is an (algebraic) limit of a sequence of geometrically finite
hyperbolic 3-manifolds. The Tameness Theorem is a crucial tool in the recent proof
of this conjecture.

\medskip\noindent
{\bf Density Theorem:} {\em  If $M$ is a compact hyperbolizable
3-manifold, then $AH(M)$ is the closure of its interior $\iAH$.}

\medskip

One may derive the Density Theorem from the Tameness
Theorem, the Ending Lamination Theorem,  and convergence results of
Thurston \cite{thurston2,thurston3}, Kleineidam-Souto \cite{kleineidam-souto},
Lecuire \cite{lecuire} and Kim-Lecuire-Ohshika \cite{KLO}.
Basically, the idea here is to consider the end invariants of a given 3-manifold,
use the convergence results to construct a hyperbolic 3-manifold
with the given end invariants which arises as a limit of geometrically finite
hyperbolic 3-manifolds. (In the case that the manifold is homotopy equivalent
to a compression body one must use clever arguments of Namazi-Souto \cite{namazi-souto}
or Ohshika \cite{ohshika-density} to verify that our limits have the correct
ending invariants.) One then applies the Ending Lamination Theorem to show
that our limit and our original manifold are the same.

The other approach makes use of the deformation theory of cone-manifolds
developed by Hodgson-Kerckhoff \cite{HK1,hodgson-kerckhoff} 
and Bromberg \cite{bromberg-deform}.  Many cases of the Density Theorem were
established by Bromberg and Brock-Bromberg \cite{brock-bromberg} and their approach
was generalized to prove the entire theorem by Bromberg
and Souto \cite{bromberg-souto}. This approach uses the Tameness Theorem but
not the Ending Lamination Theorem.

\medskip

Anderson, Canary and McCullough \cite{ACM} gave a complete enumeration
of the components of the closure of $\iAH$ in the case that $\pi_1(M)$ is
freely indecomposable. Given the resolution of the Density Conjecture, we now
have a complete enumeration of the components of $AH(M)$. 

Again, to state the result, we will need more definitions. 
Given two compact irreducible $3$-manifolds $M_1$ and $M_2$
with nonempty incompressible boundary, a homotopy equivalence
$h\colon\, M_1\to M_2$ is a {\em
primitive shuffle equivalence} if there exists a finite collection $V_1$ of
primitive solid torus components of $\Sigma(M_1)$ and a finite
collection $V_2$ of solid torus components of $\Sigma(M_2)$, so
that $h^{-1}(V_2)=V_1$ and so that $h$ restricts to an
orientation-preserving homeomorphism from the closure of
$M_1-V_1$ to the closure of $M_2-V_2$. We recall that a solid torus $V$  in
the characteristic submanifold is said to be {\em primitive} if 
given any annulus component $A$  of $V\cap \partial M$ 
the image of $\pi_1(A)$ in  $\pi_1(M)$
is a maximal abelian subgroup. 
(We refer the reader to
Jaco-Shalen \cite{jaco-shalen} or Johannson \cite{johannson} for a
discussion of the characteristic submanifold and to Canary-McCullough
\cite{canary-mccullough} for a discussion of the characteristic submanifold
in the setting of hyperbolizable 3-manifolds.)
Primitive shuffle equivalence induces a finite-to-one equivalence relation on
$\mathcal{A}(M)$ and we let $\widehat{\mathcal{A}}(M)$ denote the quotient.

Anderson, Canary and McCullough \cite{ACM} prove that if $\pi_1(M)$ is freely
indecomposable, then two components of
$\iAH$ have intersecting closure if and only if their associated (marked)
homeomorphism types differ by a primitive shuffle equivalence. Combining
this result with the Density Theorem we obtain:

\begin{theorem}
If $M$ is a compact, hyperbolizable 3-manifold with freely indecomposable
fundamental group, then the components of $AH(M)$
are in one-to-one correspondence with $\widehat{\mathcal{A}}(M)$.
\end{theorem}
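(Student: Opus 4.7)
The plan is to combine three ingredients already stated in the excerpt: the parameterization theorem for $\iAH$, the Density Theorem $AH(M)=\overline{\iAH}$, and the Anderson--Canary--McCullough result that two components of $\iAH$ have intersecting closures in $AH(M)$ if and only if their associated marked homeomorphism types in $\mathcal{A}(M)$ differ by a primitive shuffle equivalence. The strategy is to show that the natural map $\Phi\colon\pi_0(\iAH)\to\pi_0(AH(M))$ descends to the quotient $\widehat{\mathcal{A}}(M)$ and is a bijection onto $\pi_0(AH(M))$.

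First I would observe that since $\pi_1(M)$ is freely indecomposable, the remark after the parameterization theorem gives $Mod_0(M')=1$ for every $(M',h')\in\mathcal{A}(M)$, so each component of $\iAH$ is a Teichm\"uller space, in particular connected. Hence $\pi_0(\iAH)$ is canonically in bijection with $\mathcal{A}(M)$. By the Density Theorem every component $C$ of $AH(M)$ meets $\iAH$, so contains at least one component of $\iAH$; this gives surjectivity of $\Phi$. The ACM criterion shows $\Phi$ is constant on primitive shuffle equivalence classes, since two components of $\iAH$ whose closures meet must lie in the same component of $AH(M)$. Thus $\Phi$ factors to a surjection $\widehat{\mathcal{A}}(M)\to\pi_0(AH(M))$.

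The substantive step is injectivity. Suppose two components $U_1,U_2$ of $\iAH$ lie in the same component $C$ of $AH(M)$. Since $\iAH$ is dense, $C=\overline{C\cap\iAH}$. I would define a relation on the set of components of $C\cap\iAH$ by declaring $U\sim U'$ iff $\overline{U}\cap\overline{U'}\neq\emptyset$, and consider the equivalence relation it generates. If there were more than one class, say $S$ and its complement $S^c$ within $\pi_0(C\cap\iAH)$, then the closed sets $\overline{\bigcup_{U\in S}U}$ and $\overline{\bigcup_{U\in S^c}U}$ would cover $C$ and (by the non-adjacency between $S$ and $S^c$) be disjoint, contradicting connectedness of $C$. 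Hence $U_1$ and $U_2$ are linked by a chain of pairwise-adjacent components, and the ACM criterion converts each link into a primitive shuffle equivalence; since primitive shuffle equivalence is already an equivalence relation on $\mathcal{A}(M)$ (as noted in the excerpt), $U_1$ and $U_2$ represent the same class in $\widehat{\mathcal{A}}(M)$.

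The main obstacle is the separation argument in the injectivity step: one must verify that when the components of $C\cap\iAH$ split into two mutually non-adjacent families, the corresponding unions genuinely have disjoint closures inside $C$, so that their closures provide an honest topological separation. This is where the pairwise-closure form of the ACM result is essential, together with enough local niceness of $AH(M)$ (as a closed subset of the character variety) to ensure that a boundary point limiting on some interior component already forces a closure-intersection with that component. Once this point-set issue is handled, the bijection $\widehat{\mathcal{A}}(M)\cong\pi_0(AH(M))$ follows immediately from the three input theorems.
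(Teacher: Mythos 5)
Your overall strategy --- combine the parameterization of $\iAH$, the Density Theorem, and the Anderson--Canary--McCullough bumping criterion --- is the right one, and it is essentially what the paper does. But the paper does not carry out the point-set argument you attempt: it cites \cite{ACM} for the full enumeration of the components of $\overline{\iAH}$ and then simply substitutes $AH(M)=\overline{\iAH}$ from the Density Theorem. You instead try to re-derive that enumeration from the pairwise statement ``closures of two components of $\iAH$ intersect iff the marked homeomorphism types differ by a primitive shuffle,'' and this is where there is a genuine gap --- one you flag yourself as ``the main obstacle'' but do not close. The problem occurs exactly when $\mathcal{A}(M)$ is infinite (i.e.\ when $M$ has double trouble, which the very next corollary shows is the case of interest for producing infinitely many components). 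For an infinite family of components $\{U_a\}$, the closure of $\bigcup_{a\in S}U_a$ need not equal $\bigcup_{a\in S}\overline{U_a}$: a boundary point can be a limit of points drawn from infinitely many distinct components while lying in the closure of no single one. This breaks your argument in two places. First, your surjectivity step ``by the Density Theorem every component $C$ of $AH(M)$ meets $\iAH$'' is not a consequence of density alone; a component of the closure of an open set can be contained entirely in its frontier (think of $\{0\}$ inside the closure of a union of disjoint open intervals accumulating at $0$). Second, in your separation argument the two sets $\overline{\bigcup_{U\in S}U}$ and $\overline{\bigcup_{U\in S^c}U}$ need not be disjoint even when no $\overline{U}$ with $U\in S$ meets any $\overline{U'}$ with $U'\in S^c$, so they need not provide a disconnection of $C$.

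What closes the gap is additional structure proved in \cite{ACM} and not derivable from the quoted bumping criterion: primitive shuffle equivalence is a \emph{finite-to-one} relation on $\mathcal{A}(M)$ (so each candidate component of $\overline{\iAH}$ is a \emph{finite} union of closures $\overline{U_a}$, for which closure does commute with union and your separation argument is sound), together with the fact that every point of $\overline{\iAH}$ lies in $\overline{U_a}$ for some single $a$ --- equivalently, an algebraic limit of representations drawn from infinitely many distinct components cannot escape the union of the individual closures. With those two facts in hand, your chain argument goes through verbatim; without them, the ``local niceness of $AH(M)$'' you invoke is precisely the content of the theorem of \cite{ACM} that the paper is quoting. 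So the proposal is correct in outline and complete in the case that $\mathcal{A}(M)$ is finite, but in the general case it assumes the substantive part of the input theorem rather than proving it.
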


Canary and McCullough \cite{canary-mccullough} proved that if $\pi_1(M)$ has
freely indecomposable fundamental group, then  $\mathcal{A}(M)$ has infinitely many
elements if and only if $M$ has double trouble.  $M$ is said to have {\em double trouble} if
there exist simple closed curves $\alpha$, $ \beta$ and $\gamma$ in $\partial M$ which
are homotopic in $M$, but not in $\partial M$, and $\alpha$ and $\beta$ lie on non-toroidal
boundary components of $M$, while $\gamma$ lies on a toroidal boundary component.
(Canary and McCullough \cite{canary-mccullough} give a complete analysis of 
when $\mathcal{A}(M)$ is finite in the general case.) So, we obtain the following corollary:

\begin{corollary}
Let $M$ be a compact, hyperbolizable 3-manifold with freely indecomposable
fundamental group. Then, $AH(M)$ has infinitely many components
if and only if $M$ has double trouble.
\end{corollary}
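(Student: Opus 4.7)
The plan is to combine the preceding theorem with the cited work of Canary--McCullough, using a simple cardinality argument to bridge them. First I would observe that by the theorem immediately preceding the corollary, the components of $AH(M)$ are in bijection with $\widehat{\mathcal{A}}(M)$, so the question reduces to deciding when the set $\widehat{\mathcal{A}}(M)$ is infinite. Since $\widehat{\mathcal{A}}(M)$ is the quotient of $\mathcal{A}(M)$ by the primitive shuffle equivalence relation, and this relation is stated in the paper to be finite-to-one, the set $\widehat{\mathcal{A}}(M)$ is infinite if and only if $\mathcal{A}(M)$ is infinite.

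Next I would invoke the cited result of Canary--McCullough, which asserts precisely that for $M$ compact, hyperbolizable with freely indecomposable fundamental group, the set $\mathcal{A}(M)$ has infinitely many elements if and only if $M$ has double trouble. Chaining the two equivalences yields that $AH(M)$ has infinitely many components if and only if $M$ has double trouble.

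The only step requiring any care beyond quoting results is the passage from $\mathcal{A}(M)$ to $\widehat{\mathcal{A}}(M)$, and this is handled by the observation that a finite-to-one quotient of a set preserves finiteness and infiniteness in both directions. Since freely indecomposable fundamental group rules out many of the degeneracies that arise in the general case, there is no further subtlety in applying Canary--McCullough here; the general-case analysis they carry out in \cite{canary-mccullough} is not needed. Thus the hardest aspect is essentially bookkeeping, and the corollary follows without additional input beyond the Density Theorem (which underlies the preceding component-counting theorem) and the characterization of $\mathcal{A}(M)$ in \cite{canary-mccullough}.
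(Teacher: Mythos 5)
Your proposal is correct and follows exactly the route the paper intends: the preceding theorem identifies components of $AH(M)$ with $\widehat{\mathcal{A}}(M)$, the finite-to-one nature of the primitive shuffle quotient shows $\widehat{\mathcal{A}}(M)$ is infinite precisely when $\mathcal{A}(M)$ is, and the Canary--McCullough characterization of when $\mathcal{A}(M)$ is infinite (via double trouble) finishes the argument.
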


{\bf Remark:} The author has recently completed a survey article \cite{canary-india}
on the deformation theory of hyperbolic 3-manifolds which contains a more detailed
discussion of the topic.

\end{document}